\documentclass[11pt]{article}
\textwidth 15cm
\textheight 22.0cm
\voffset -0.5cm
\hoffset -1.1cm
\baselineskip = 0.3cm


\usepackage{graphicx}
\usepackage{amsmath,amsfonts,amsthm}
\usepackage{amssymb,latexsym}
\usepackage{fixmath}
\usepackage{mathrsfs,amsbsy}
\usepackage{dsfont}
\usepackage{enumerate}

\DeclareMathOperator*{\argmin}{argmin}

\DeclareMathOperator{\diag}{diag}

\DeclareMathOperator{\rank}{rank}

\DeclareMathOperator{\HH}{H}
\DeclareMathOperator{\T}{T}

\def\ba{\pmb{a}}
\def\bb{\pmb{b}}

\def\be{\pmb{e}}
\def\Bf{\pmb{f}}

\def\bh{\pmb{h}}

\def\bk{\pmb{k}}

\def\bn{\pmb{n}}

\def\bp{\pmb{p}}

\def\br{\pmb{r}}
\def\bs{\pmb{s}}
\def\bt{\pmb{t}}

\def\bv{\pmb{v}}
\def\bw{\pmb{w}}
\def\bx{\pmb{x}}
\def\by{\pmb{y}}
\def\bz{\pmb{z}}

\newtheorem{proposition}{Proposition}[section]
\newtheorem{theorem}{Theorem}[section]
\newtheorem{lemma}{Lemma}[section]

\theoremstyle{definition}

\newtheorem{remark}{Remark}[section]
\newtheorem{example}{Example}[section]

\numberwithin{equation}{section}
\numberwithin{figure}{section}
\numberwithin{table}{section}

\allowdisplaybreaks

\usepackage{kbordermatrix}

\usepackage{amsfonts,amsthm}
\usepackage{amsmath,amssymb,enumerate,color}
\usepackage{algorithm,algorithmic}
\usepackage{epsfig,graphicx,psfrag,mathrsfs,subfigure}
\usepackage{eucal}
\usepackage{yhmath}
\usepackage{hyperref}
\hypersetup{colorlinks}
\RequirePackage{multirow}
 
\usepackage{textcomp}
\usepackage[]{fontenc}
\usepackage{extarrows}

\usepackage{rotating}
\usepackage{lscape}
\usepackage{bm}
\usepackage{xspace}
\usepackage{tikz}
\usepackage{geometry}
\def\ba{\pmb{a}}
\def\bb{\pmb{b}}

\def\be{\pmb{e}}
\def\Bf{\pmb{f}}

\def\bh{\pmb{h}}

\def\bk{\pmb{k}}

\def\bn{\pmb{n}}

\def\bp{\pmb{p}}

\def\br{\pmb{r}}
\def\bs{\pmb{s}}
\def\bt{\pmb{t}}

\def\bv{\pmb{v}}
\def\bw{\pmb{w}}
\def\bx{\pmb{x}}
\def\by{\pmb{y}}
\def\bz{\pmb{z}}

\def\tu{\mathfrak{u}}

\def\bbC{\mathbb{C}}

\def\bbP{\mathbb{P}}
\def\bbR{\mathbb{R}}

\renewcommand{\algorithmicrequire}{\textbf{Input:}}
\renewcommand{\algorithmicensure}{\textbf{Output:}}

\numberwithin{equation}{section}
\numberwithin{figure}{section}
\numberwithin{table}{section}

\graphicspath{{figures/}{figure/}{pictures/}%
	{picture/}{pic/}{pics/}{image/}{images/}}

\title{The $L_q$-weighted dual programming of the linear Chebyshev approximation and an interior-point method}
\author{Linyi Yang\thanks{School of Mathematical Sciences, Soochow University, Suzhou 215006, Jiangsu, China. Email: {\tt lyyang161@stu.suda.edu.cn}.}\and Lei-Hong Zhang\thanks{Corresponding author. School of Mathematical Sciences, Soochow University, Suzhou 215006, Jiangsu, China. This work was
 supported in part by the National Natural Science Foundation of China
        NSFC-12071332.
        Email: {\tt longzlh@suda.edu.cn}.}\and Ya-Nan Zhang\thanks{School of Mathematical Sciences, Soochow University, Suzhou 215006, Jiangsu, China.  Email: {\tt ynzhang@suda.edu.cn}.}}

\date{ }

\begin{document}

\maketitle

\begin{abstract}
Given  samples of a real or complex-valued function on a set of distinct nodes, the traditional linear Chebyshev approximation is to compute the best minimax approximation on a prescribed linear functional space. Lawson's iteration is a classical and well-known method for that task. However,  Lawson's iteration converges linearly and in many cases, the convergence is very slow. In this paper, by the duality theory of linear programming, we first provide  an elementary and self-contained proof for the well-known Alternation Theorem in the real case. Also, relying upon the Lagrange duality, we further establish an $L_q$-weighted  dual programming for the linear Chebyshev approximation. In this framework, we revisit the convergence of Lawson's iteration, and moreover, propose a Newton type iteration, the interior-point method,  to solve the $L_2$-weighted  dual programming.  Numerical experiments are reported to demonstrate its fast convergence and its capability in finding  the reference points that  characterize the unique minimax approximation.
\end{abstract}


\medskip
{\small
{\bf Key words. Chebyshev approximation, Lawson's iteration, Lagrange duality, Linear programming, Interior-point method}    
\medskip

{\bf AMS subject classifications. 97N50, 33F05, 68W25, 90C51, 97N30}
}



\section{Introduction}\label{sec_intro}

Let $\Omega$ be a compact subset of the complex plane $\bbC$ and $f(x)$ be  a complex-valued function on $\Omega$.
For a given set of  nodes $\{(x_j,f_j)\}_{j=1}^m$ sampled from  $f_j=f(x_j)\in \bbC$ ($x_j\in \Omega$) over a distinct set, we consider the best linear Chebyshev  approximation of finding a function $p^*\in \bbP_n={\rm span}(\psi_1,\dots,\psi_n)$ defined by
\begin{equation}\label{eq:bestf}
p^*=\argmin_{p\in\bbP_n}\|\Bf-p(\bx)\|_{\infty},
\end{equation}
where $\Bf=[f_1,\dots,f_m]^{\T}\in\bbC^m ~(n+1\le m)$,  $\bx=[x_1,\dots,x_m]^{\T}\in\bbC^m$,
\begin{equation}\nonumber
\|\Bf-p(\bx)\|_{\infty}=\max_{1\le j\le m}|f_j-p(x_j)|
\end{equation}
and $\{\psi_j\}_{j=1}^n$ are complex-valued functions defined on $\Omega$ satisfying the Haar condition\footnote{For complex-valued continuous functions $\psi_i$ defined on $\Omega$, we say the set $\{\psi_i\}_{i=1}^n$  satisfies the Haar condition \cite[Chapter  3.4]{chen:1982}   if the matrix $[\psi_{i}(x_j)]\in \bbC^{n\times n}$ is invertible for arbitrary $n$ distinct points $\{x_j\}_{j=1}^n$ in $\Omega$.}  (aks the Chebyshev system or the Chebyshev set) \cite[Chapter  3.4]{chen:1982} and \cite{rice:1969}.
Under these assumptions, it is known that the solution $p^*$ of \eqref{eq:bestf} is unique (see e.g., \cite[Chapter 3.4]{chen:1982}, \cite[Chapter 4]{lore:1966}, \cite{pous:1911} or \cite[Problem 2]{rish:1961}). In particular, when $n=m$, then the solution $p^*$ is the interpolation function satisfying $p^*(x_j)=f_j$ and thus, we only concern with the case $m\ge n+1$ and $\|\Bf-p^*(\bx)\|_{\infty}>0$ in this paper.

A traditional method for solving \eqref{eq:bestf} is Lawson's iteration \cite{laws:1961}, in which the best approximation $p^*$ is achieved as the limit of a sequential weighted least-squares problems.  The basic Lawson's iteration can be summarized in Algorithm \ref{alg:lawson}.
\begin{algorithm}[t]
\caption{Lawson's iteration for \eqref{eq:bestf}} \label{alg:lawson}
\begin{algorithmic}[1]
\renewcommand{\algorithmicrequire}{\textbf{Input:}}
\renewcommand{\algorithmicensure}{\textbf{Output:}}
\REQUIRE $\{(x_j,f_j)\}_{j=1}^m$ and $0\le n+1\le m$ and a function basis $\{\psi_i\}_{i=1}^n$ for $\bbP_n$
\ENSURE  the (approximation) solution $p^*\in \bbP_n$ of \eqref{eq:bestf} 
        \smallskip

\STATE  Let $k=0$, $d^{(0)}=0$ and select $0<\bw^{(0)}=[w_1^{(0)},\dots,w_m^{(0)}]^{\T}$ and $\be^{\T}\bw^{(0)}=1$;
\STATE  Solve $$p^{(k)}=\arg\min_{p\in \bbP_n}\sum_{j=1}^m w^{(k)}_j|f_j-p(x_j)|^2$$
and compute $d^{(k)}=\sqrt{\sum_{j=1}^m w_j^{(k)}\left|f_j-p^{(k)}(x_j)\right|^2}$;
\STATE  Update $w_j^{(k+1)}=\frac{w_j^{(k)}\left|f_j-p^{(k)}(x_j)\right|}{\sum_{i=1}^m w_i^{(k)}\left|f_i-p^{(k)}(x_i)\right|}$ for $j=1,2,\dots,m$;
\STATE Stop if $\frac{|d^{(k)}-d^{(k+1)}|}{d^{(k)}}\le \epsilon$; otherwise $k=k+1$ and goto step 2.
\end{algorithmic}
\end{algorithm}

One  of principles behind Lawson's iteration is an elegant necessary and sufficient characterization (see e.g., \cite{rish:1961,will:1972}). 
In particular, for the real case (i.e., $\Omega\subseteq\bbR$, $f(x)$ and $\{\psi_j\}_{j=1}^n$ are real-valued functions), this characterization is known as the  Alternation Theorem (aks The Chebyshev Equioscillation Theorem) (see e.g., \cite[Chapter 3.4]{chen:1982} and also Theorem \ref{thm:Alternation}). Specifically, for a real valued $f$ defined in an  interval $\Omega=[a,b]$,   $p^*\in \bbP_n$ is the solution of \eqref{eq:bestf} if and only if there are at least $n+1$ reference points (we call these points as equioscillation points) $x_{\beta_1}<x_{\beta_2}<\dots<x_{\beta_{n+1}}$ in $X$ so that $p^*(x_{\beta_j})-f_{{\beta_j}}=(-1)^{j}\|p^*(\bx)-\Bf\|_{\infty}$ for $1\le j\le n+1$ or, $p^*(x_{\beta_j})-f_{{\beta_j}}=(-1)^{j+1}\|p^*(\bx)-\Bf\|_{\infty}$ for $1\le j\le n+1$. Basically, Lawson's iteration attempts to determine these equioscillation points or a subset of nodes that contains equioscillation points by boosting the associated weights and suppressing, simultaneously,  other weights corresponding to non-equioscillation points in an iterative scheme. 
The convergence of Lawson's iteration is discussed in \cite{laws:1961}, and the linear convergence rate is established in \cite{laws:1961,clin:1972}.  Several extensions of Lawson's iteration for the complex case as well as the new updating schemes  for the weight sequence $\{w_j^{(k)}\}$ are developed in \cite{elwi:1976} and \cite{rius:1968}, respectively. Moreover, recent applications of Lawson's iteration have also been presented  in \cite{fint:2018,nase:2018,natr:2020} for the best real and/or complex minimax rational approximations.

It has been known \cite{laws:1961,clin:1972} that the convergence of Lawson's iteration is linear and the convergence factor depends on how fast the weights corresponding to non-reference points decay to zero (see also our discussion in section \ref{subsec_revistLawson}). In practice, however, when the nodes are relatively dense in $\Omega$, or, there are too many nodes near some reference points, the convergence can be very slow, which then reduces its efficiency and accuracy as too many weighted least-squares problems need to be solved. Strategies for accelerating the convergence of the weights have been developed, e.g., in \cite{elwi:1976} and \cite{rius:1968} in the framework of Lawson's iteration, but some of them are not guaranteed for global convergence and the convergence rate is still linear. We will revisit Lawson's iteration in section \ref{subsec_revistLawson}.

In order to accelerate the convergence of weights and understand Lawson's iteration better, we made the following contributions in this paper:
\begin{itemize}
\item[(1)] By relying upon the Lagrange duality in modern optimization, we formulate an $L_q$-weighted dual programming of best Chebyshev problem \eqref{eq:bestf}; 
\item[(2)] By using the duality theory of  linear programming, for the real case, we provide an elementary and self-contained proof for the well-known Alternation Theorem;
\item[(3)] In the framework of $L_q$-weighted dual programming, we revisit Lawson's iteration and its convergence, and moreover, we propose an interior-point method for \eqref{eq:bestf}; the updating for weights in the interior-point method  is based on the Newton iteration and the convergence is much faster than Lawson's iteration. Numerical examples are reported and show that  the exactly $n+1$ reference points (i.e., equioscillation points in the real case) can be obtained in many cases.
\end{itemize}

The rest of the paper is organized as follows. In section \ref{sec_dual}, we establish the $L_q$-weighted dual programming of the best Chebyshev problem \eqref{eq:bestf} by the Lagrange duality theory. Based on an  equivalent  reformulation, for the real case, in section \ref{subsec_L1}, we provide an elementary and self-contained proof for the well-known Alternation Theorem relying on the duality theory in linear programming. In section \ref{subsec_dualL2}, we focus on the $L_2$-weighted dual programming of \eqref{eq:bestf}; in particular, we will present the gradient and the Hessian of the $L_2$-weighted dual objective function and also revisit Lawson's iteration in this framework. The new interior-point method for \eqref{eq:bestf} is presented in section \ref{sec:IP} and numerical experiments are reported in section \ref{sec:IP}.  Finally concluding remarks are drawn in section \ref{sec:conclusion}.

{\bf Notation}.
Throughout this paper, $\textsf{i}=\sqrt{-1}$ represents the imaginary unit, and for $\xi\in \bbC$, we write $\xi= \xi^{\tt r}+ {\tt i} \xi^{\tt i}$ and $\bar\xi= \xi^{\tt r}- {\tt i} \xi^{\tt i}$ where ${\rm Re }(\xi)=\xi^{\tt r}\in \bbR,~{\rm Im }(\xi)=\xi^{\tt i}\in \bbR$ and $|\xi|=\sqrt{(\xi^{\tt r})^2+(\xi^{\tt i})^2}$. Vectors are denoted in bold lower case letters, and ${\mathbb C}^{n\times m}$ (resp. ${\mathbb R}^{n\times m}$) is the set
of all $n\times m$ complex (resp. real) matrices, with $I_n\equiv [\be_1,\be_2,\dots,\be_n]\in\bbR^{n\times n}$ presenting the $n$-by-$ n$ identity matrix, where $\be_i$ is its $i$th column. 
For  $A\in\bbC^{m\times n}$, $A^{\HH}$ (resp. $A^{\T}$)  and $A^{\dag}$ are the conjugate transpose (resp. transpose) and the Moore-Penrose inverse of $A$, respectively; ${\rm span}(A)$ represents the column space of $A$ and $\|\bx\|_q=(\sum_{j=1}^n|x_j|^q)^\frac1q$ denotes the vector $q$-norm ($q\ge 1$) for $\bx\in \bbC^n$. Also, we denote the  $k$th Krylov subspace generated by a matrix $A$ on a vector $\bb$ by
\[
{\cal K}_k(A,\bb)={\rm span}(\bb,A\bb,\dots,A^{k-1}\bb)
\]
and  define the probability simplex  ($\be=[1,1,\dots,1]^{\T}$)
\begin{equation}\label{eq:simplex}
S:=\{\bw=[w_1,\dots,w_m]^{\T}\in \bbR^m: \bw\ge 0 ~{\rm and } ~\bw^{\T}\be=1\}.
\end{equation}
\section{The $L_q$-weighted dual of best Chebyshev problem} \label{sec_dual}
For a given $p\in \bbP_n$, define $r_i(p)=f_i-p(x_i)$ and 
$$
\br(p)=[r_1(p),\dots,r_m(p)]^{\T}\in \bbC^m, 
$$
and when there is no confusion can arise, we will simply write $\br = \br(p)$. 
We can first establish the $L_q$-weighted  dual programming.
\begin{theorem}\label{thm:q-dual}
Let $\{\psi_j\}_{j=1}^n$ be a basis of $\bbP_n$ satisfying the Haar condition on $\Omega$. Given $m\ge n+1$ distinct nodes $X=\{x_1,\dots,x_m\}$ on $\Omega$, let $p^*\in \bbP_n$ be the solution to \eqref{eq:bestf} and 
$$
\eta_\infty^*=\|\Bf-p^*(\bx)\|_\infty>0.
$$
Then  we have the following:
\begin{itemize}
\item[(1).]For $q\ge1$, 
\begin{equation}\label{eq:q-dual}
\mbox{($L_q$-weighted  dual programming)}\quad  \eta_\infty^*=\left(\max_{\bw\in S}d_q(\bw)\right)^{1/q}, 
\end{equation}
where for $\forall \bw\in S$, 
\begin{equation}\label{eq:q-dual2}
d_q(\bw)=\min_{p\in \bbP_n}\sum_{j=1}^m w_j |f_j-p(x_j)|^q;
\end{equation}
moreover, for any solution $\bw^*=[w_1^*,\dots,w_m^*]^{\T}$ of \eqref{eq:q-dual}, there are at least $n+1$ non-zero weights and  
$$\mbox{(complementary slackness)}\quad w_j^*(\eta_\infty^*-|f_j-p^*(x_j)|)=0, \quad\forall j=1,2,\dots,m.$$
\item[(2).] For $q>1$, then for any solution $\bw^*$ of \eqref{eq:q-dual}, $p^*$ is the unique solution for \eqref{eq:q-dual2} to achieve the minimum $d_q(\bw^*)=(\eta_\infty^*)^q$.
\end{itemize}
\end{theorem}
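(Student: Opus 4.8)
The plan is to recognize the $L_q$-weighted program \eqref{eq:q-dual} as the Lagrangian dual of a reformulation of \eqref{eq:bestf}. First I would rewrite the primal: since $t\mapsto t^{1/q}$ is strictly increasing on $[0,\infty)$, the number $(\eta_\infty^*)^q$ is the optimal value of
\[
\min_{p\in\bbP_n,\ t\in\bbR}\ t\qquad\text{subject to}\qquad |f_j-p(x_j)|^q\le t,\quad j=1,\dots,m .
\]
Attaching multipliers $\bw=[w_1,\dots,w_m]^{\T}\ge 0$, the Lagrangian is $L(p,t,\bw)=t\,(1-\be^{\T}\bw)+\sum_{j=1}^m w_j|f_j-p(x_j)|^q$; minimizing over $t\in\bbR$ is finite only when $\be^{\T}\bw=1$, i.e.\ $\bw\in S$, and what then remains is $\min_{p\in\bbP_n}\sum_{j=1}^m w_j|f_j-p(x_j)|^q=d_q(\bw)$, so the Lagrangian dual is exactly $\max_{\bw\in S}d_q(\bw)$.

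Next I would prove \eqref{eq:q-dual} by two inequalities. Weak duality is immediate: for any $\bw\in S$ one has $d_q(\bw)\le\sum_{j=1}^m w_j|f_j-p^*(x_j)|^q\le(\be^{\T}\bw)(\eta_\infty^*)^q=(\eta_\infty^*)^q$, so $\max_{\bw\in S}d_q(\bw)\le(\eta_\infty^*)^q$. For the reverse inequality I would invoke strong duality: for $q\ge1$ each map $p\mapsto|f_j-p(x_j)|^q$ is convex, so the reformulated primal is a convex program, and Slater's condition holds, since any $p_0\in\bbP_n$ with $t_0>\max_j|f_j-p_0(x_j)|^q$ is strictly feasible; therefore the primal and dual optima coincide and the dual optimum is attained. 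This yields \eqref{eq:q-dual}.

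For complementary slackness, let $\bw^*$ be any maximizer in \eqref{eq:q-dual}, so $d_q(\bw^*)=(\eta_\infty^*)^q$. Inserting $p^*$ into the inner minimization and using $|f_j-p^*(x_j)|\le\eta_\infty^*$,
\[
(\eta_\infty^*)^q=d_q(\bw^*)\le\sum_{j=1}^m w_j^*|f_j-p^*(x_j)|^q\le\sum_{j=1}^m w_j^*(\eta_\infty^*)^q=(\eta_\infty^*)^q ,
\]
so all three inequalities are equalities. The middle one reads $\sum_{j=1}^m w_j^*\bigl((\eta_\infty^*)^q-|f_j-p^*(x_j)|^q\bigr)=0$ with nonnegative summands, forcing $w_j^*\bigl((\eta_\infty^*)^q-|f_j-p^*(x_j)|^q\bigr)=0$, equivalently (since $t\mapsto t^q$ is strictly increasing) $w_j^*\bigl(\eta_\infty^*-|f_j-p^*(x_j)|\bigr)=0$; it also shows that $p^*$ attains the inner minimum defining $d_q(\bw^*)$. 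For the number of active weights: if the support $J:=\{j:w_j^*>0\}$ satisfied $|J|\le n$, the Haar condition would let us find $p\in\bbP_n$ interpolating $\{f_j\}_{j\in J}$ (at most $n$ distinct nodes are always interpolable), whence $\sum_{j=1}^m w_j^*|f_j-p(x_j)|^q=0$, contradicting $d_q(\bw^*)=(\eta_\infty^*)^q>0$; hence $|J|\ge n+1$.

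For part (2) with $q>1$, I would use that $c\mapsto|c|^q$ is strictly convex on $\bbC$. Fix a maximizer $\bw^*$, let $J$ be its support, and set $h(p)=\sum_{j\in J}w_j^*|f_j-p(x_j)|^q$; as shown above $p^*$ minimizes $h$. If $\wtd p$ were another minimizer, $h$ would be constant on the segment joining $p^*$ and $\wtd p$, forcing each term with $j\in J$ to be affine along it and hence, by strict convexity, $p^*(x_j)=\wtd p(x_j)$ for all $j\in J$; since $|J|\ge n+1>n$, the Haar condition makes $p\mapsto(p(x_j))_{j\in J}$ injective on $\bbP_n$, so $\wtd p=p^*$, giving $d_q(\bw^*)=h(p^*)=(\eta_\infty^*)^q$ with $p^*$ the unique minimizer. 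The main obstacle is the strong-duality step: the content is in choosing the primal reformulation so that its Lagrangian dual is precisely \eqref{eq:q-dual} and in verifying convexity together with Slater's condition, after which zero duality gap and dual attainment are standard; the complementary slackness, the bound $|J|\ge n+1$, and the uniqueness then follow quickly, but they rely essentially on the Haar condition — through interpolability of at most $n$ nodes, and injectivity of point evaluation at $n+1$ or more nodes.
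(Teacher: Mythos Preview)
Your proposal is correct and follows essentially the same route as the paper's proof: the same primal reformulation $\min\{t:|f_j-p(x_j)|^q\le t\}$, the same Lagrangian dual yielding $\max_{\bw\in S}d_q(\bw)$, weak duality plus Slater's condition for \eqref{eq:q-dual}, the same chain of equalities for complementary slackness, and the same Haar-based interpolation argument for $|J|\ge n+1$. The only minor difference is in part~(2): the paper argues strict convexity via Minkowski's inequality and its equality case (proportionality of the residual vectors), whereas you invoke strict convexity of $c\mapsto|c|^q$ on $\bbC$ directly to force $p^*(x_j)=\wtd p(x_j)$ on the support --- both reach the same conclusion by the same underlying mechanism.
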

\begin{proof}
For (1), we can easily see that $(\eta_\infty^*)^q$ is the minimum of the following primal problem ($q\ge 1$)
\begin{equation}\label{eq:q-equv}
\min_{\eta,~p\in \bbP_n}\eta,~~s.t.,~ |f_j-p(x_j)|^q\le \eta, ~~\forall j=1,2,\dots,m.
\end{equation}
Defining the basis matrix 
\begin{equation}\nonumber
\Psi=\Psi(x_1,\dots,x_m):=\left[\begin{array}{cccc}\psi_1(x_1) & \psi_2(x_1) & \cdots & \psi_n(x_1) \\\psi_1(x_2) & \psi_2(x_2) & \cdots & \psi_n(x_2)  \\ \vdots & \cdots & \cdots& \vdots  \\\psi_1(x_m) & \psi_2(x_m) & \cdots & \psi_n(x_m) \end{array}\right]\in \bbC^{m\times n} \mbox{~with ~} \Psi_{i,j}=\psi_j(x_i),
\end{equation}
 for any $p\in \bbP_n$, we can parameterize the vector $p(\bx)\in \bbC^n$ as 
$p(\bx)=\Psi \ba$ associated with the coefficient vector $\ba=[a_1,\dots,a_n]^{\T}\in \bbR^n.$ We now consider the Lagrange dual problem of \eqref{eq:q-equv}. 

To apply the Lagrange dual theory, we write $f_j$, $\ba$ and $\Psi$ as the real and imaginary part and have 
\begin{align}\nonumber
 |f_j-\be_j^{\T}\Psi \ba|^q&=|f_j^{\texttt{r}}+\texttt{i} f_j^{\texttt{i}}-\be_j^{\T}(\Psi^{\tt r}+{\tt{i}} \Psi^{\tt i}) (\ba^{\tt r}+{\tt{i}} \ba^{\tt i})|^q\\\label{eq:jthresd1}
 &=\left((f_j^{\texttt{r}}-\be_j^{\T}\hat \Psi \hat\ba)^2 +(f_j^{\texttt{i}}-\be_j^{\T}\check \Psi \hat\ba)^2  \right)^{\frac q2} \\\label{eq:jthresd2}
 &=\left\|\hat {\Bf}_j-\left[\begin{array}{c}\be_j^{\T}\hat \Psi \\\be_j^{\T}\check \Psi \end{array}\right]\hat \ba\right\|_2^q,
\end{align}
where $\hat {\Bf}_j=\left[\begin{array}{c}f_j^{\tt r} \\f_j^{\tt i} \end{array}\right]\in \bbR^{2},~ \ba=\left[\begin{array}{c}\ba^{\tt r} \\\ba^{\tt i} \end{array}\right]\in \bbR^{2n}$, $\hat  \Psi=[\Psi^{\tt r}, -\Psi^{\tt i}]\in \bbR^{m\times 2n}$ and $\check  \Psi=[\Psi^{\tt i}, \Psi^{\tt r}]\in \bbR^{m\times 2n}$.  
Introducing the dual variable $w_j\in \bbR$ for each constraint in \eqref{eq:q-equv} and using \eqref{eq:jthresd1}, we have the Lagrange  function 
\begin{align*}
\ell(\eta,\hat\ba,\bw)&=\eta-\sum_{j=1}^m w_j( \eta-|f_j-\be_j^{\T}\Psi \ba|^q)\\
&=(1-\bw^{\T}{\be})\eta+\sum_{j=1}^m w_j \left((f_j^{\texttt{r}}-\be_j^{\T}\hat \Psi \hat\ba)^2 +(f_j^{\texttt{i}}-\be_j^{\T}\check \Psi \hat\ba)^2  \right)^{\frac q2}.
\end{align*}
Note that the Lagrange dual objective function is given by (see e.g., \cite[Chapter 12]{nowr:2006} or \cite{boyd:2004})
\begin{equation}\nonumber
d_q(\bw)=\inf_{\eta,~\hat\ba}\ell(\eta,\hat\ba,\bw).
\end{equation}
The domain ${\cal W}$ of $d_q(\bw)$ is ${\cal W}:=\{\bw:d_q(\bw)>-\infty\}$ and by \eqref{eq:q-dual}, we know ${\cal W}$ is just the probability simplex $S$ given by \eqref{eq:simplex}. Thus, 
\begin{equation}\nonumber
d_q(\bw)=\inf_{\eta,\hat\ba}\ell(\eta,\hat\ba,\bw)=\min_{\ba\in \bbC^n}\sum_{j=1}^m w_j |f_j-\be_j^{\T}\Psi \ba|^q.
\end{equation}
Since for $q\ge 1$,  by \eqref{eq:jthresd2}, we know that each term 
$$w_j |f_j-\be_j^{\T}\Psi \ba|^q=w_j\left\|\hat {\Bf}_j-\left[\begin{array}{c}\be_j^{\T}\hat \Psi \\\be_j^{\T}\check \Psi \end{array}\right]\hat \ba\right\|_2^q
$$ is convex \cite[Example 3.14]{boyd:2004} with respect to $\hat\ba$ for $\bw\in S$, and therefore, if we let $\hat \ba=\hat\ba(\bw)\in \bbR^{2n}$ be a solution (equivalently, we have $\ba(\bw)\in \bbC^{n}$) for the minimization  in \eqref{eq:q-dual2}, we can cast the Lagrange dual problem of \eqref{eq:q-equv} as 
\begin{equation}\nonumber
\max_{\bw\in S}d_q(\bw)=\max_{\bw\in S}\sum_{j=1}^m w_j |f_j-\be_j^{\T}\Psi \ba(\bw)|^q=\max_{\bw\in S}\min_{\ba\in \bbC^n}\sum_{j=1}^m w_j |f_j-\be_j^{\T}\Psi \ba|^q.
\end{equation}
The weak Lagrange duality ensures that for any $\bw\in S$,   $d_q(\bw)\le (\eta_\infty^*)^q$, and thus $$d_q^*:=\max_{\bw\in S}d_q(\bw)\le (\eta_\infty^*)^q.$$
For the strong duality $d_q^*=(\eta_\infty^*)^q$, we note that the Slater condition (see e.g., \cite[Section 5.2.3]{boyd:2004}) holds for \eqref{eq:q-equv} because there is $(\eta, \hat\ba)$ so that $\left((f_j^{\texttt{r}}-\be_j^{\T}\hat \Psi \hat\ba)^2 +(f_j^{\texttt{i}}-\be_j^{\T}\check \Psi \hat\ba)^2  \right)^{\frac q2} =|f_j-\be_j^{\T}\Psi \ba|^q<\eta$ for any $j=1,2,\dots,m$.

Let  $\bw^*$ be any solution of \eqref{eq:q-dual} and $p_d^*$ be any solution for \eqref{eq:q-dual2} to achieve $d_q(\bw^*)$.  From
\begin{align*}
\eta_\infty^*=(d_q^*)^{1/q}&=\left(\sum_{j=1}^m w^*_j|f_j-p_d^*(x_j)|^q\right)^{1/q}\\
&\le \left(\sum_{j=1}^m w^*_j|f_j-p^*(x_j)|^q\right)^{1/q}\\
&\le \left(\sum_{j=1}^m w^*_j (\eta_\infty^*)^q\right)^{1/q}=\eta_\infty^*,
\end{align*}
where the first inequality follows from the fact that $p_d^*$ is the minimizer for \eqref{eq:q-dual2} associated with $\bw^*$ and the second inequality from $|f_j-p^*(x_j)|\le \max_{1\le j\le m}|f_j-p^*(x_j)|=\eta_\infty^*$, we know that at any node $x_j$ with a nonzero $w_j^*\ne 0$, we have $|f_j-p^*(x_j)|=\eta_\infty^*$.  Thus we have the complementary slackness. 
We next show that there are at least $n+1$ nonzero weights $w_j^*$. Suppose  by contradiction that  there are  $k\le n$ nonzero weights, say, $w_1^*,\dots,w_k^*$. Then as $\{\psi_i\}_{i=1}^n$ satisfies the Haar condition, there is a vector $\ba\in \bbC^n$ so that 
$f_j=\be_j^{\T} \Psi(x_1,\dots,x_k)\ba$ for $1\le j\le k$ because  $\rank(\Psi(x_1,\dots,x_k))=k$; that is, there is a $p\in \bbP_n$ so that the minimizer $p\in \bbP_n$ of \eqref{eq:q-dual2} associated with $\bw^*$  satisfies $d_q(\bw^*)=d_q^*=\min_{p\in \bbP_n}\sum_{j=1}^m w_j^* |f_j-p(x_j)|^q=0$, a contradiction.

For (2), suppose, without loss of generality,  that $w_1^*,\dots,w_k^*$ are nonzero for $k\ge n+1$ and we show that 
$$\min_{p\in \bbP_n}\sum_{j=1}^m w_j^* |f_j-p(x_j)|^q=\min_{p\in \bbP_n}\sum_{j=1}^k w_j^* |f_j-p(x_j)|^q$$ has a unique solution $p^*$ by the strict convexity of this minimization. Indeed, 
 with $W_k^*=\diag(w_1^*,\dots,w_k^*)>0$, $\bk=[f_1,\dots,f_k]^{\T}\in \bbC^k$ and \eqref{eq:jthresd2}, let 
 \begin{align*}
 m(\ba)&:=\left(\sum_{j=1}^k w_j^* |f_j-\be_j^{\T}\Psi(x_1,\dots,x_k)\ba|^q\right)^{1/q}=\left\|(W_k^*)^{\frac1q}(\bk-\Psi(x_1,\dots,x_k) \ba)\right\|_{q}
\\
&=\sum_{j=1}^k w_j^*\left\|\hat {\Bf}_j-\left[\begin{array}{c}\be_j^{\T}\hat \Psi(x_1,\dots,x_k) \\\be_j^{\T}\check \Psi(x_1,\dots,x_k)\end{array}\right]\hat \ba\right\|_2^q=:\hat m(\hat \ba);
 \end{align*}
then we know that $\hat m(\hat\ba)$ is convex with respect to $\hat \ba$ (equivalently, $m(\ba)$ is convex with respect to $\ba$) due to the Minkowski inequality, i.e.,  $m(\alpha \ba+(1-\alpha)\bb)\le \alpha m( \ba)+(1-\alpha)m(\bb)$, $\forall 0\le \alpha\le 1$; moreover, when the equality holds, then 
 $\bk- \Psi(x_1,\dots,x_k) \ba=\xi\left( \bk- \Psi(x_1,\dots,x_k) \bb\right)$ for some $\xi\in\bbC$. If $\xi\ne 1$, then we have $$\bk= \Psi(x_1,\dots,x_k)\frac{\ba-\xi\bb}{1-\xi}$$  implying again that   $d_q(\bw^*)=d_q^*=\min_{p\in \bbP_n}\sum_{j=1}^m w_j^* |f_j-p(x_j)|^q=0$, a contradiction.  Hence $\Psi(x_1,\dots,x_k)(\ba-\bb)=0$ which by $\rank(\Psi(x_1,\dots,x_k))=n$ leads to $\ba=\bb$.  Consequently, the strict convexity of $m(\ba)$ holds which ensures that $p^*$ is the unique solution for \eqref{eq:q-dual2} to achieve the minimum $d_q(\bw^*)=(\eta_\infty^*)^q$.
 %
%
\end{proof}


\section{The Alternation Theorem for the real case}\label{subsec_L1}
It is well-known that for the real case, the primal programming \eqref{eq:q-equv} of the particular case with $q=1$ can be solved by a linear programming (LP). Indeed,  \eqref{eq:q-equv} can be reformlualted as 
\begin{equation}\nonumber
\min_{\eta,\ba\in \bbR^n}\eta, ~~s.t.,\ |\Bf-\Psi\ba|\le \eta \be 
\end{equation}
 which can be rewritten as the following LP \cite[Section 1.2.2]{boyd:2004}:
\begin{equation}\label{eq:L1LP-b}
\min_{\eta,\ba\in \bbR^n}  
[\mathbf{0}_{n}^{\T},1]\left[\begin{array}{c}\ba \\\eta\end{array}\right], ~~s.t., \left[\begin{array}{cc}\Psi & -\be\\-\Psi & -\be\end{array}\right]\left[\begin{array}{c}\ba \\\eta\end{array}\right]\le \left[\begin{array}{c}\Bf \\-\Bf\end{array}\right].
\end{equation}
In the case of $\bbP_n={\rm span}(1,x,\cdots,x^{n-1})$, a natural way of expressing $p^*=[1,x,\cdots,x^{n-1}]\ba$ is to use the basis $\{1,x,\cdots,x^{n-1}\}$, which leads to the ill-conditioned Vandermonde matrix 
\begin{equation}\label{eq:vadm}
V_{\bx}=[\be,X\be,\dots,X^{n-1}\be]=\left[\begin{array}{cccc}1 & x_1& \dots & x_1^{n-1} \\1 & x_2& \dots & x_2^{n-1} \\ & \dots & \dots &  \\1 & x_m& \dots & x_m^{n-1}\end{array}\right]
\end{equation}
for $\Psi$ in \eqref{eq:L1LP-b} and consequently an ill-conditioned LP. Fortunately, recent development in \cite{brnt:2021,zhsl:2023} implies that we can use the Lanczos process (see e.g., \cite{govl:2013}) to work on an orthogonal basis  $\Psi_{\bx}$ with $\Psi_{\bx}^{\T}\Psi_{\bx}=\sqrt{m}I_m$ and $V_{\bx}=\Psi_{\bx} R_{\bx}$ where $R_{\bx}$ is upper triangular, to write $V_{\bx}\ba=\Psi_{\bx}(R_{\bx}\ba)=\Psi_{\bx}\tilde\ba$ where $\tilde\ba=R_{\bx}\ba$. Thus, the resulting LP is 
\begin{equation}\label{eq:L1LP-c}
\min_{\eta,\tilde\ba\in \bbR^n}  
[\mathbf{0}_{n}^{\T},1]\left[\begin{array}{c}\tilde\ba \\\eta\end{array}\right], ~~s.t., \left[\begin{array}{cc}\Psi_{\bx} & -\be\\-\Psi_{\bx} & -\be\end{array}\right]\left[\begin{array}{c}\tilde\ba \\\eta\end{array}\right]\le \left[\begin{array}{c}\Bf \\-\Bf\end{array}\right].
\end{equation}
Particularly, we know that the coefficient matrix $\left[\begin{array}{cc}\Psi_{\bx} & -\be\\-\Psi_{\bx} & -\be\end{array}\right]$ of   the LP \eqref{eq:L1LP-c}  is orthogonal. Even though recovering the coefficient vector $\ba$ in the basis $\{1,x,\cdots,x^{n-1}\}$ still involves solving an ill-conditioned system $R_{\bx}\ba=\tilde \ba$, this can be avoided if we only need to compute  values of $p^*(\bv)$ at new nodes $\bv=[v_1,\dots,v_N]^{\T}\in\bbR^N$. Specifically, the Lanczos process can be proceeded reversely to have a basis $\Psi_{\bv}$ for the new Vandermonde
\begin{equation}\label{eq:vadms}
V_{\bv}=\left[\begin{array}{cccc}1 & v_1& \dots & v_1^{n-1} \\1 & v_2& \dots & v_2^{n-1} \\ & \dots & \dots &  \\1 & v_N& \dots & v_N^{n-1}\end{array}\right]
\end{equation}
satisfying $V_{\bv}=\Psi_{\bv} R_{\bx}$ (see \cite[Algorithm 1 and Figure 2.1]{zhsl:2023})  and, therefore, 
$$\bp_{\bv}=[p^*(v_1),\cdots,p^*(v_N)]^{\T}=\Psi_{\bv}\tilde \ba,$$ where $\tilde \ba$ is computed from \eqref{eq:L1LP-c}. For more detailed discussion, refer to \cite{brnt:2021,zhsl:2023}.

For the real case, as a by-product of the equivalent linear programming  formulation \eqref{eq:L1LP-c} for the original \eqref{eq:bestf}, we can provide an elementary proof for the well-known Alternation Theorem (aks the Chebyshev Equioscillation Theorem) (see e.g., \cite[Chapter 3.4]{chen:1982}) using basic results of LP and the following lemma (see \cite[Chapter 3.4]{chen:1982}).
\begin{lemma}\label{lem:genVand}
Let $\{\psi_i\}_{i=1}^n$ be a system of elements from continuous functions on $\Omega=[a,b]$ satisfying the Haar condition. Then for arbitrary $n+1$ points $\{z_j\}_{j=0}^{n}$ with $a\le z_0<z_1<\dots<z_n\le b$, the set $\{\bt\in\bbR^{n+1}:[\Psi(z_0,\dots,z_n)]^{\T}\bt=\mathbf{0}\}$ is a one dimensional subspace and for any nonzero solution $\bt=[t_1,\dots,t_{n+1}]^{\T}$, it holds $t_jt_{j+1}<0,~\forall j=1,2\dots, n$.
\end{lemma}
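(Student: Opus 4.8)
The plan is to establish the two assertions of the lemma separately, each as a consequence of applying the Haar condition to configurations of $n+1$ points. For the dimension claim, I would first note that $\Psi(z_0,\dots,z_n)\in\bbR^{(n+1)\times n}$ has full column rank $n$: deleting any one of its $n+1$ rows leaves an $n\times n$ matrix $\Psi(z_0,\dots,z_{i-1},z_{i+1},\dots,z_n)$ which is invertible by the Haar condition. Hence $[\Psi(z_0,\dots,z_n)]^{\T}\in\bbR^{n\times(n+1)}$ also has rank $n$, so its null space $\{\bt:[\Psi(z_0,\dots,z_n)]^{\T}\bt=\mathbf{0}\}$ is a subspace of dimension $(n+1)-n=1$.

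For the sign claim I would exhibit an explicit nonzero null vector built from $n\times n$ minors. Set $D_i:=\det\Psi(z_0,\dots,z_{i-1},z_{i+1},\dots,z_n)$ for $i=0,1,\dots,n$, i.e.\ the determinant over the $n$ points obtained by discarding $z_i$, and let $\bt$ have entries $t_{i+1}=(-1)^{i+1}D_i$. The standard cofactor identity — prepend the $i$-th row of $[\Psi(z_0,\dots,z_n)]^{\T}$ to that matrix, obtaining an $(n+1)\times(n+1)$ matrix with a repeated row hence zero determinant, and expand along the first row — shows that this expansion equals, up to sign, the $i$-th coordinate of $[\Psi(z_0,\dots,z_n)]^{\T}\bt$, so $[\Psi(z_0,\dots,z_n)]^{\T}\bt=\mathbf{0}$. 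By the Haar condition every $D_i\ne0$, so $\bt\ne\mathbf{0}$, and by the one-dimensionality above every nonzero null vector is a scalar multiple of this $\bt$; in particular no coordinate of any nonzero null vector vanishes. Since $t_j=(-1)^jD_{j-1}$ and $t_{j+1}=(-1)^{j+1}D_j$, we have $t_jt_{j+1}=-D_{j-1}D_j$, so it suffices to show that consecutive minors $D_{j-1}$ and $D_j$ have the same sign.

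To see this I would use continuity of the $\psi_i$. The map $(y_1,\dots,y_n)\mapsto\det\Psi(y_1,\dots,y_n)$ is continuous on the set $U=\{(y_1,\dots,y_n):a\le y_1<y_2<\dots<y_n\le b\}$, which is convex and hence connected, and it never vanishes on $U$ by the Haar condition; therefore it keeps one constant sign $\sigma\in\{+1,-1\}$ throughout $U$. Each $D_i$ is the value of this map at a point of $U$ (discarding $z_i$ from $z_0<\dots<z_n$ still yields an increasing tuple), so $\mathrm{sign}(D_i)=\sigma$ for every $i$; consequently $t_jt_{j+1}=-D_{j-1}D_j<0$ for all $j=1,\dots,n$, which is the desired conclusion.

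I expect the last step — that all of the relevant $n\times n$ minors carry a common sign — to be the only real obstacle. The rank/dimension count and the Cramer-type construction of the null vector are routine linear algebra, whereas propagating the sign genuinely requires the topological input (connectedness of the ordered configuration space together with continuity of the basis functions); this is precisely the place where the hypothesis that the $\psi_i$ are continuous, rather than merely that the $n\times n$ matrices are invertible, is used.
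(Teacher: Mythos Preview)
Your argument is correct. The rank/dimension computation, the cofactor construction of an explicit null vector, and the continuity/connectedness argument that all the $n\times n$ minors $D_i$ share a common sign are each sound; in particular the observation that the open simplex $U=\{a\le y_1<\cdots<y_n\le b\}$ is convex, hence connected, together with the nonvanishing of $\det\Psi(\cdot)$ on $U$, is exactly the right way to pin down the sign.

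By way of comparison, the paper does not actually prove this lemma: its ``proof'' consists of the single remark that $\rank\Psi(z_0,\dots,z_n)=n$ together with a citation to Cheney's \emph{Introduction to Approximation Theory}. Your write-up therefore supplies what the paper outsources. The approach you take---signed minors as an explicit null vector, then a topological argument for the common sign of the minors---is the classical one and is essentially what one finds in Cheney, so in substance you and the cited reference agree; the benefit of your version is that it is self-contained and makes transparent where the continuity hypothesis on the $\psi_i$ enters (namely only in the final sign step), whereas the paper simply appeals to the literature.
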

\begin{proof}
The result is from \cite[Chapter 3.4]{chen:1982} and $\rank(\Psi(z_0,\dots,z_n))=n$.
\end{proof}
\begin{theorem} {\rm (Alternation Theorem)}\label{thm:Alternation}
Let $\{\psi_i\}_{i=1}^n$ be a system of elements from continuous functions on $\Omega=[a,b]$  satisfying the Haar condition, and $X=\{x_1,\dots,x_m\}$ with $a\le x_1<x_2<\ldots<x_m\le b$ and $m\ge n+1$. Then $p^*$ is the unique solution to \eqref{eq:bestf} if and only if there are at least $n+1$ points $x_{\beta_1}<x_{\beta_2}<\dots<x_{\beta_{n+1}}$ in $X$ so that $p^*(x_{\beta_j})-f_{{\beta_j}}=(-1)^{j}\|p^*(\bx)-\Bf\|_{\infty}$ for $1\le j\le n+1$ or, $p^*(x_{\beta_j})-f_{{\beta_j}}=(-1)^{j+1}\|p^*(\bx)-\Bf\|_{\infty}$ for $1\le j\le n+1$.
\end{theorem}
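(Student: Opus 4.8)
The plan is to derive the equioscillation from an optimal \emph{basic feasible solution} of the LP dual of \eqref{eq:L1LP-b}; the orthogonalized form \eqref{eq:L1LP-c} plays no role beyond conditioning, so I work with the Haar basis $\{\psi_i\}_{i=1}^n$ and $\Psi=\Psi(x_1,\dots,x_m)$. Writing $z=[\ba^{\T},\eta]^{\T}\in\bbR^{n+1}$, problem \eqref{eq:L1LP-b} is $\min\{[\bzs_n^{\T},1]z:[\Psi,-\be;-\Psi,-\be]z\le[\Bf^{\T},-\Bf^{\T}]^{\T}\}$ with optimal value $\eta_\infty^*$; adding the $j$th rows of the two blocks gives $\eta\ge0$, so this is feasible and bounded. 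Splitting the $2m$ multipliers into nonnegative blocks $\bu,\bv\in\bbR^m$, the LP dual is $\max\{\Bf^{\T}(\bv-\bu):\bu,\bv\ge\bzs,\ \Psi^{\T}(\bu-\bv)=\bzs,\ \be^{\T}(\bu+\bv)=1\}$, which is feasible (take $\bu=\bv=\tfrac1{2m}\be$). Hence strong duality holds and the dual attains its maximum at a basic feasible solution $(\bu^*,\bv^*)$.

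For the necessity, set $\bs^*:=\bu^*-\bv^*$, so $\Psi^{\T}\bs^*=\bzs$. Complementary slackness for the two constraint blocks reads $u^*_j>0\Rightarrow p^*(x_j)-f_j=\eta_\infty^*$ and $v^*_j>0\Rightarrow p^*(x_j)-f_j=-\eta_\infty^*$; since $\eta_\infty^*>0$ these exclude each other, so at every $j$ at most one of $u^*_j,v^*_j$ is positive, and therefore on $J:=\{j:s^*_j\neq0\}$ one has $|p^*(x_j)-f_j|=\eta_\infty^*$ and $\mathrm{sign}(s^*_j)=\mathrm{sign}(p^*(x_j)-f_j)$. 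A basic feasible solution of a system with $n+1$ equality constraints has at most $n+1$ nonzero coordinates, so $|J|\le n+1$; and if $|J|=k\le n$ then $\rank(\Psi(x_{j_1},\dots,x_{j_k}))=k$ (the Haar-condition argument used in the proof of Theorem \ref{thm:q-dual}) would force $(s^*_j)_{j\in J}=\bzs$, impossible, so $|J|=n+1$. Writing $J=\{\beta_1,\dots,\beta_{n+1}\}$ with $x_{\beta_1}<\dots<x_{\beta_{n+1}}$ and restricting $\Psi^{\T}\bs^*=\bzs$ to these rows gives $[\Psi(x_{\beta_1},\dots,x_{\beta_{n+1}})]^{\T}\bt=\bzs$ with $\bt:=(s^*_{\beta_1},\dots,s^*_{\beta_{n+1}})\neq\bzs$; Lemma \ref{lem:genVand} then gives $t_jt_{j+1}<0$, i.e. $\mathrm{sign}(s^*_{\beta_j})=\varepsilon(-1)^j$ for a fixed $\varepsilon\in\{+1,-1\}$. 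Combining this with $\mathrm{sign}(s^*_{\beta_j})=\mathrm{sign}(p^*(x_{\beta_j})-f_{\beta_j})$ and $|p^*(x_{\beta_j})-f_{\beta_j}|=\eta_\infty^*$ yields $p^*(x_{\beta_j})-f_{\beta_j}=\varepsilon(-1)^j\eta_\infty^*$, which is the asserted equioscillation ($\varepsilon$ fixing which of the two cases occurs).

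For the sufficiency, suppose $p\in\bbP_n$ and $x_{\beta_1}<\dots<x_{\beta_{n+1}}$ realize the equioscillation with $\rho:=\|p(\bx)-\Bf\|_\infty$. By Lemma \ref{lem:genVand} pick $\bt\in\bbR^{n+1}\setminus\{\bzs\}$ with $[\Psi(x_{\beta_1},\dots,x_{\beta_{n+1}})]^{\T}\bt=\bzs$; its signs strictly alternate, so replacing $\bt$ by a suitable scalar multiple we may assume $\|\bt\|_1=1$ and $\mathrm{sign}(t_j)=\mathrm{sign}(p(x_{\beta_j})-f_{\beta_j})$ for all $j$. Let $\bs\in\bbR^m$ extend $\bt$ by zeros; then $\Psi^{\T}\bs=\bzs$, so $\bs^{\T}(\Bf-q(\bx))=\bs^{\T}\Bf$ is independent of $q\in\bbP_n$, while taking $q=p$ gives $\bs^{\T}\Bf=\bs^{\T}(\Bf-p(\bx))=\sum_j t_j(f_{\beta_j}-p(x_{\beta_j}))=-\rho\sum_j|t_j|=-\rho$. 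Hence $\|\Bf-q(\bx)\|_\infty\ge|\bs^{\T}(\Bf-q(\bx))|=\rho$ for every $q\in\bbP_n$ by Hölder's inequality with $\|\bs\|_1=1$, so $p$ is a best minimax approximant; by the uniqueness of the minimax solution recalled in the introduction, $p=p^*$ and $\rho=\eta_\infty^*$.

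I expect the main obstacle to be the necessity direction, specifically the step that forces the support of $\bs^*$ to have \emph{exactly} $n+1$ indices: it combines the LP fact that a basic feasible solution of a system with $n+1$ equalities has at most $n+1$ nonzero coordinates, the complementary-slackness observation that $u^*_j$ and $v^*_j$ are never simultaneously active (which is exactly where $\eta_\infty^*>0$ is used), and the Haar-condition fact that $\Psi$ admits no nonzero left-null vector supported on $\le n$ rows. Once the support is known to consist of $n+1$ points, Lemma \ref{lem:genVand} produces the alternating sign pattern at once, and the sufficiency direction is the classical zero-counting argument recast through the linear functional $\bs\mapsto\bs^{\T}\Bf$.
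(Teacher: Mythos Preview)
Your proof is correct and follows essentially the same LP-duality route as the paper: both take a basic optimal solution of the dual \eqref{eq:L1LP-b-daul}, use complementary slackness together with $\eta_\infty^*>0$ to see that $u_j^*$ and $v_j^*$ are never simultaneously positive, invoke the Haar condition to force exactly $n+1$ support indices, and then apply Lemma~\ref{lem:genVand} for the sign alternation. Your sufficiency direction is phrased via the H\"older (weak-duality) bound $\|\Bf-q(\bx)\|_\infty\ge|\bs^{\T}(\Bf-q(\bx))|$ rather than the paper's explicit KKT verification in \eqref{eq:L1LP-bs}, but the dual certificate $\bs$ you construct is exactly the paper's $\bt^*$ (extended by zeros), so this is only a cosmetic difference.
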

\begin{proof}  
The result is trivial if $\eta_\infty^*=0$, i.e., $p^*$ is an interpolation of $f$ over $X$. For the general case $\eta_\infty^*>0$, we can write the dual LP of \eqref{eq:L1LP-b} as follows
 \begin{align}\label{eq:L1LP-b-daul}
\max_{\by_+,\by_- \in\bbR^m}&  \nonumber
-[\Bf^{\T},-\Bf^{\T}] \left[\begin{array}{c}\by_+ \\\by_-\end{array}\right]\\
 s.t., ~&\left[\begin{array}{cc}\Psi^{\T} & -\Psi^{\T}\\-\be^{\T} & -\be^{\T}\end{array}\right]\left[\begin{array}{c}\by_+ \\\by_-\end{array}\right]= \left[\begin{array}{c}\mathbf{0}_{n}\\-1\end{array}\right],\quad \left[\begin{array}{c}\by_+ \\\by_-\end{array}\right]\ge \mathbf{0}_{2m},
\end{align}
which is a standard LP. We shall establish the Alternation Theorem based on \eqref{eq:L1LP-b} and \eqref{eq:L1LP-b-daul}.

As the LP \eqref{eq:L1LP-b} of \eqref{eq:bestf} has optimal solutions (by the equivalence between \eqref{eq:bestf} and   \eqref{eq:L1LP-b}),  and the coefficient matrix of \eqref{eq:L1LP-b-daul} has rank $n+1$, the fundamental  theorem (see e.g., \cite[Theorems 13.1 and 13.2]{nowr:2006}) of LP implies that the maximum can be achieved in a {\it basic optimal solution} $\by^*=\left[\begin{array}{c}\by^*_+ \\\by^*_-\end{array}\right]$ with the associated basic variables $\{y_{\nu_1,\beta_1},y_{\nu_2,\beta_2},\dots,y_{\nu_{n+1},\beta_{n+1}}\}$ and $\nu_j\in \{+,-\}$. Let  ${\cal B}=\{\beta_1,\dots,\beta_{n+1}\}\subset \{1,2,\dots,m\}=:[m]$ and thus $y^*_j=0$ for all $j\in[m] \setminus {\cal B}$.  Denote by $$\bt^*=[t^*_1,\dots,t^*_{m}]^{\T}:=\by^*_+-\by^*_-=[y^*_{+,1}-y^*_{-,1},\dots,y^*_{+,m}-y^*_{-,m}]^{\T}.$$ According to the strong duality \cite[Theorem 13.1]{nowr:2006}, the maximum $-\Bf^{\T}\bt^*$ is equal to the minimum of \eqref{eq:L1LP-b}, which is $\eta_\infty^*=\|\Bf-p^*(\bx)\|_\infty$ as \eqref{eq:L1LP-b} is equivalent to \eqref{eq:bestf}; that is $-\Bf^{\T}\bt^*=\eta_\infty^*>0.$ 
By the first $n$ equality constraints in \eqref{eq:L1LP-b-daul}, we have  $\Psi^{\T}\bt^*=[\Psi(x_{\beta_1},\dots, x_{\beta_{n+1}})]^{\T}\bt_{\cal B}^*=0$, where we have used the fact that $t^*_j=0$ for all $j\in[m] \setminus {\cal B}$, and $\bt_{\cal B}^*=[t_{\beta_1}^*,\dots, t_{\beta_{n+1}}^*]^{\T}\in \bbR^{n+1}$ is a sub-vector of $\bt^*$. By  $-\Bf^{\T}\bt^*=\eta_\infty^*>0$, $\bt_{\cal B}^*\neq \mathbf{0}_{n+1}$, and thus we can apply Lemma \ref{lem:genVand} to conclude  $t^*_{\beta_j}t^*_{\beta_{j+1}}<0,~\forall j=1,2\dots, n$. Because $t_j^*:=y^*_{+,j}-y^*_{-,j}$ and there are at most $n+1$ nonzero elements in $\by^*$, we know that for each $t_{\beta_j}^*$,  one of $y^*_{+,{\beta_j}}$ and $y^*_{-,{\beta_j}}$ is positive and the other is zero. This also implies that every basic optimal solution $\by^*$ of \eqref{eq:L1LP-b-daul} is non-degenerate \cite[Definition 13.1]{nowr:2006}. 
On the other hand, according to the complementarity condition (see e.g., \cite[Equ. (13.9d)]{nowr:2006}), we have $t^*_{\beta_j}(|f_{\beta_j}-p^*(x_{\beta_j})|-\eta_{\infty}^*)=0$ and the alternation property $t^*_{\beta_j}t^*_{\beta_{j+1}}<0,~\forall j=1,2\dots, n$ implies either  $p^*(x_{\beta_j})-f_{{\beta_j}}=(-1)^{j}\eta_\infty^* $ for $1\le j\le n+1$ or, $p^*(x_{\beta_j})-f_{{\beta_j}}=(-1)^{j+1}\eta_\infty^*$ for $1\le j\le n+1$.

We next show that if $p^*\in \bbP_n$ satisfies the alternation property, then $p^*$ is a solution to \eqref{eq:bestf}. Without loss of generality, for the first $n+1$ points $x_{1}<x_{2}<\dots<x_{n+1}$ in $X$ (i.e., ${\cal B}=\{1,2,\dots,n+1\}$), assume   $p^*(x_{j})-f_{{j}}=(-1)^{j-1}\|p^*(\bx)-\Bf\|_{\infty}$ for $1\le j\le n+1$. Our idea is to construct the Karush–Kuhn–Tucker (KKT) conditions (see e.g., \cite[Equ. (14.3)]{nowr:2006}) and show that $p^*$ corresponds to a  basic optimal solution for  \eqref{eq:L1LP-b}; as \eqref{eq:L1LP-b} is equivalent to the original \eqref{eq:bestf}, we then know that $p^*$ is the solution to \eqref{eq:bestf}. To this end, let $p^*(\bx)=\Psi \ba^*$ and  introduce the slack variables $\bs\ge \mathbf{0}$ and write \eqref{eq:L1LP-b} as 
\begin{equation}\label{eq:L1LP-bs}
\min_{\eta,\ba\in \bbR^n}  
[\mathbf{0}_{n}^{\T},1]\left[\begin{array}{c}\ba \\\eta\end{array}\right], ~~s.t., \left[\begin{array}{cc}\Psi & -\be\\-\Psi & -\be\end{array}\right]\left[\begin{array}{c}\ba \\\eta\end{array}\right]+\bs= \left[\begin{array}{c}\Bf \\-\Bf\end{array}\right],~~\bs\ge \mathbf{0}.
\end{equation}
Define $\eta^*=\|p^*(\bx)-\Bf\|_{\infty}$ and $$\bs^*=[s^*_{+,1},\dots,s^*_{+,m},s^*_{-,1},\dots,s^*_{-,m}]^{\T}:=\left[\begin{array}{c}\Bf \\-\Bf\end{array}\right]-\left[\begin{array}{cc}\Psi & -\be\\-\Psi & -\be\end{array}\right]\left[\begin{array}{c}\ba^* \\\eta^*\end{array}\right];$$ 
by the alternation property, we know  $s_{+,j}^*=0$ for all odd $1\le j\le n+1$ and $s_{-,j}^*=0$ for all even $1\le j\le n+1$, and  other $s_{\pm,j}^*\ge 0$. We next show $(\ba^*,\eta^*,\bs^*)$ is an optimal solution for \eqref{eq:L1LP-bs} and thus $\eta^*=\eta_\infty^*$ and $p^*$ solves \eqref{eq:bestf}. 

According to $\bs^*$, we construct a feasible solution $\by^*=\left[\begin{array}{c}\by^*_+ \\\by^*_-\end{array}\right]$ of \eqref{eq:L1LP-b-daul} satisfying the KKT conditions \cite[Equ. (14.3)]{nowr:2006}. First, let $y_{+,j}^*=y_{-,j}^*=0$ for $n+2\le j\le m$ and thus $t^*_j:=y^*_{+,j}-y^*_{-,j}=0$ $(n+2\le j\le m)$. For $\bt^*_{\cal B}=[t^*_1,\dots,t^*_{n+1}]^{\T}$, by Lemma \ref{lem:genVand}, we can choose a nonzero solution $\bt^*_{\cal B}$ for $[\Psi(x_{1},\dots, x_{{n+1}})]^{\T}\bt_{\cal B}^*=\Psi^{\T}\bt^*=\mathbf{0}_{n+1}$ so that $ t^*_{1}>0$, $t^*_{j}t^*_{{j+1}}<0,~\forall j=1,2\dots, n$ and $\sum_{j=1}^{n+1}(-1)^{j-1}t_j=1$. Then for $1\le j\le n+1$, 
define
\begin{eqnarray}\nonumber
 y_{+,j}^*=\left\{
\begin{array}{ccc} t_j^*,& j \mbox{ is odd},\\0,& j \mbox{ is even},
\end{array} \right.\quad 
y_{-,j}^*=\left\{
\begin{array}{ccc} 0,& j \mbox{ is odd},\\-t_j^*,& j \mbox{ is even}.
\end{array} \right.
\end{eqnarray}
It can be easily verified that such a $\by^*$ is feasible for \eqref{eq:L1LP-b-daul} and also satisfies the complementarity condition $y^*_{\nu,j}s^*_{\nu,j}=0~(1\le j\le  m,~\nu\in\{+,-\})$. Consequently, the KKT conditions \cite[Equ. (14.3)]{nowr:2006} are fulfilled, implying that $(\ba^*,\eta^*,\bs^*)$ is an optimal solution for \eqref{eq:L1LP-bs} and $p^*$ solves \eqref{eq:bestf}.
\end{proof}

\section{The $L_2$-weighted dual programming}\label{subsec_dualL2}
\subsection{The gradient and the Hessian of the dual objective function}\label{subsec_gradhess}
For the general complex case, another interesting case is $q=2$.  This leads to the $L_2$-weighted dual programming (we simply denote $d(\bw)=d_2(\bw)$ for the dual objective function  in \eqref{eq:q-dual2})
\begin{equation}\label{eq:dualmaxdw}
\max_{\bw\in S}d(\bw)
\end{equation}
and   Lawson's iteration can be analyzed on this dual problem.  In the following proposition, we provide the gradient and the Hessian of the Lagrange dual function $d(\bw)$ for $\bw>0$.

\begin{proposition}\label{prop:grad_dw}
For $q=2$, denote  the Lagrange dual function \eqref{eq:q-dual2} by 
\begin{equation}\label{eq:q-dual_q2}
d(\bw)=\min_{\ba\in \bbC^n}\|W^{\frac12}\Bf-W^{\frac12}\Psi \ba\|_2^2,
\end{equation}
where $W=\diag(w_1,\dots,w_m)$ with diagonal entries from $\bw$. Then for $\bw>0$, the gradient and the Hessian of $d(\bw)$ are given by 
\begin{equation}\label{eq:grad_dw}
\nabla d(\bw)=\left[\begin{array}{c}|r_1|^2 \\\vdots \\|r_m|^2\end{array}\right]\in \bbR^m,
\end{equation}
and 
\begin{equation}\label{eq:hess_dw}
\nabla^2 d(\bw)=-2{\rm Re}\left(\diag(\br)^{\HH} \Psi(\Psi^{\HH}W\Psi)^{-1}\Psi^{\HH}\diag(\br)\right)\in \bbR^{m\times m},
\end{equation}
where $\br=[r_1,\dots,r_m]^{\T}\in \bbC^n$ with $r_i=f_i-\be_i^{\T}\Psi \ba(\bw)\in \bbC$ and $\ba(\bw)\in \bbC^n$ is the solution for the minimization of \eqref{eq:q-dual_q2}. 
\end{proposition}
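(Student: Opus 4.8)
The plan is to regard $\bw$ as the independent (real) variable ranging over the open set $\{\bw\in\bbR^m:\bw>0\}$, on which the inner weighted least-squares problem in \eqref{eq:q-dual_q2} has a unique minimizer depending smoothly on $\bw$; the gradient then follows from the envelope theorem, and the Hessian from one further differentiation of that gradient.

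\textbf{Step 1 (the inner minimizer).} With $W=\diag(\bw)>0$, the Haar condition together with $m\ge n+1$ forces $\Psi\in\bbC^{m\times n}$ to have full column rank $n$ (any $n$ of its rows already form an invertible submatrix), so $\Psi^{\HH}W\Psi=(W^{\frac12}\Psi)^{\HH}(W^{\frac12}\Psi)$ is Hermitian positive definite. Hence the strictly convex problem \eqref{eq:q-dual_q2} has the unique solution
\begin{equation}\nonumber
\ba(\bw)=(\Psi^{\HH}W\Psi)^{-1}\Psi^{\HH}W\Bf,
\end{equation}
which is a rational, hence real-analytic, function of $\bw$ on $\{\bw>0\}$; consequently $\br=\br(\bw)=\Bf-\Psi\ba(\bw)$ (with $r_i=f_i-\be_i^{\T}\Psi\ba(\bw)$) and $d(\bw)=\br^{\HH}W\br=\sum_{j=1}^m w_j|r_j|^2$ are smooth there.

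\textbf{Step 2 (gradient).} Write $d(\bw)=\min_{\ba}g(\bw,\ba)$ with $g(\bw,\ba)=\sum_{j=1}^m w_j|f_j-\be_j^{\T}\Psi\ba|^2$, viewed as a function of the real variables $\bw$ and $(\ba^{\tt r},\ba^{\tt i})$. Since $(\ba^{\tt r}(\bw),\ba^{\tt i}(\bw))$ is a stationary point of $g(\bw,\cdot)$, the chain rule yields
\begin{equation}\nonumber
\frac{\partial d}{\partial w_k}=\frac{\partial g}{\partial w_k}\Big|_{\ba=\ba(\bw)}=|f_k-\be_k^{\T}\Psi\ba(\bw)|^2=|r_k|^2,\qquad k=1,\dots,m,
\end{equation}
the cross terms vanishing by first-order optimality; this is \eqref{eq:grad_dw}.

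\textbf{Step 3 (Hessian).} Differentiating $\partial d/\partial w_k=r_k\bar r_k$ in $w_l$, with $r_k=f_k-\be_k^{\T}\Psi\ba(\bw)$,
\begin{equation}\nonumber
\frac{\partial^2 d}{\partial w_l\partial w_k}=2{\rm Re}\!\left(\bar r_k\,\frac{\partial r_k}{\partial w_l}\right),\qquad \frac{\partial r_k}{\partial w_l}=-\be_k^{\T}\Psi\,\frac{\partial \ba(\bw)}{\partial w_l}.
\end{equation}
To compute $\partial\ba/\partial w_l$, set $M=M(\bw)=\Psi^{\HH}W\Psi$ and use $\partial W/\partial w_l=\be_l\be_l^{\T}$, giving $\partial M^{-1}/\partial w_l=-M^{-1}(\Psi^{\HH}\be_l)(\be_l^{\T}\Psi)M^{-1}$ and $\partial(\Psi^{\HH}W\Bf)/\partial w_l=(\Psi^{\HH}\be_l)f_l$. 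Then
\begin{equation}\nonumber
\frac{\partial\ba}{\partial w_l}=-M^{-1}(\Psi^{\HH}\be_l)\big(\be_l^{\T}\Psi\ba(\bw)\big)+M^{-1}(\Psi^{\HH}\be_l)f_l=M^{-1}(\Psi^{\HH}\be_l)\,r_l,
\end{equation}
where the collapse uses $M^{-1}\Psi^{\HH}W\Bf=\ba(\bw)$ and $\be_l^{\T}\Psi\ba(\bw)=f_l-r_l$. Hence $\partial r_k/\partial w_l=-\be_k^{\T}\Psi M^{-1}\Psi^{\HH}\be_l\,r_l$, so
\begin{equation}\nonumber
\frac{\partial^2 d}{\partial w_l\partial w_k}=-2{\rm Re}\!\left(\bar r_k\,\be_k^{\T}\Psi\,M^{-1}\,\Psi^{\HH}\be_l\,r_l\right),
\end{equation}
which is exactly the $(k,l)$ entry of $-2{\rm Re}\big(\diag(\br)^{\HH}\Psi(\Psi^{\HH}W\Psi)^{-1}\Psi^{\HH}\diag(\br)\big)$; since $M^{-1}$ is Hermitian this matrix is real symmetric, establishing \eqref{eq:hess_dw}.

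\textbf{Main obstacle.} The computation is routine matrix calculus, and the two points deserving care are: (i) the complex bookkeeping — justifying that the complex-valued maps $\ba(\bw),\br(\bw)$ may be differentiated with respect to the real vector $\bw$ and that the envelope theorem applies through the real parametrization $(\ba^{\tt r},\ba^{\tt i})$ used earlier in the paper — and (ii) the algebraic simplification in Step 3 collapsing the two terms of $\partial\ba/\partial w_l$ into the single expression $M^{-1}\Psi^{\HH}\be_l\,r_l$, which is precisely what produces the clean closed form for the Hessian.
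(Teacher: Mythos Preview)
Your proof is correct and follows essentially the same route as the paper: both obtain the gradient by differentiating the least-squares value and eliminating the $\ba$-dependent term via first-order optimality (you phrase this as the envelope theorem, the paper as the orthogonality $\Psi^{\HH}W\br=\mathbf{0}$), and both obtain the Hessian by differentiating $|r_k|^2$ entrywise. Your Step~3 is in fact more explicit than the paper's, which states the formula for $[\nabla^2 d(\bw)]_{i,j}$ without displaying the intermediate identity $\partial\ba/\partial w_l=(\Psi^{\HH}W\Psi)^{-1}\Psi^{\HH}\be_l\,r_l$ that you derive.
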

\begin{proof}
For the solution  $\ba(\bw)$, by $\bs=W^{\frac12}\Bf-W^{\frac12}\Psi \ba(\bw)$,  the conclusion follows by calculations. In particular, we have 
\begin{equation}\label{eq:grad_d}
\nabla d(\bw) =2{\rm Re}\left(\left(\frac{\partial \bs}{\partial \bw}\right)^{\HH}\bs\right)=\left[\begin{array}{c}|r_1|^2 \\\vdots \\|r_m|^2\end{array}\right]-2{\rm Re}\left(\left(\frac{\partial \ba(\bw)}{\partial \bw}\right)^{\HH}\Psi^{\HH} W \br\right).
\end{equation}
Note that  $\ba(\bw)$ is the solution to the least-squares problem \eqref{eq:q-dual_q2} and using the pseudo-inverse $(W^{\frac12}\Psi)^{\dag}$, it admits the closed form as 
$\ba(\bw)=(W^{\frac12}\Psi)^{\dag}W^{\frac12}\Bf$, or equivalently, $\Psi^{\HH}W\Psi\ba(\bw)=\Psi^{\HH}W\Bf$.
Furthermore, because $W^{\frac12}\br$ is just the residual of the least-square problem \eqref{eq:q-dual_q2}, the optimal condition of \eqref{eq:q-dual_q2} implies that $W^{\frac12}\br\perp W^{\frac12}\Psi$ or equivalently $\Psi^{\HH}W\br=\mathbf{0}$ and thus \eqref{eq:grad_dw} follows.

Finally, by computing 
\begin{align*}
[\nabla^2 d(\bw)]_{i,j}&=\frac{\partial |r_i(\bw)|^2}{\partial w_j}=2{\rm Re}\left(\frac{\partial \bar r_i(\bw)  r_i}{\partial w_j}\right)
=-2{\rm Re}\left(\bar r_i\be_i^{\T}\Psi (\Psi^{\HH}W\Psi)^{-1}\Psi^{\HH}\be_jr_j\right),
\end{align*}
\eqref{eq:hess_dw} holds.
\end{proof}

\begin{remark}\label{rmk:orthPsi}
For the Hessian of $d(\bw)$, we notice that if   $W^{\frac12}\Psi=\Psi_WR_W ~(\Psi_W\in \bbC^{m\times n},~R_W\in \bbC^{n\times n})$ is the thin QR decomposition of $W^{\frac12}\Psi$, then we have
\begin{equation}\label{eq:hess_dw_orth} 
\nabla^2 d(\bw)=-2{\rm Re}\left(R^{\HH}W^{-\frac12} \Psi_W\Psi_W^{\HH}W^{-\frac12}R\right),~ ~\mbox{with~~} R=\diag(\br)\in \bbC^{n\times n}.
\end{equation}
In the case of $\bbP_n={\rm span}(1,x,\cdots,x^{n-1})$, we will see in section \ref{subsec_revistLawson} that computing \eqref{eq:hess_dw_orth} can be done in $O(mn)$ flops by using the Arnoldi process.
\end{remark}

\subsection{Revisit Lawson's iteration}\label{subsec_revistLawson}
Note that the primary computational task in each Lawson's iteration Algorithm \ref{alg:lawson} is to solve a weighted least-squares problem \eqref{eq:q-dual_q2}. We remark that during the iteration, it is not the associated coefficient vector $\ba^{(k)}$ in $\bp^{(k)}=\Psi\ba^{(k)}$ but the residual $\br^{(k)}$ that should be computed accurately and efficiently in order to update the weight vector $\bw^{(k)}$ for the next step. As the weighted matrix $W^{(k)}=\diag(\bw^{(k)})$ is diagonal, accurately computing $W^{(k)}\br^{(k)}$ is sufficient to determine $\br^{(k)}$. 

In the case of $\bbP_n={\rm span}(1,x,\cdots,x^{n-1})$ where $\Psi=V_{\bx}$ in \eqref{eq:vadm}, a recent development in \cite{brnt:2021,zhsl:2023} provides an effective and efficient way for computing $\br^{(k)}$ without involving explicitly the Vandermonde matrix $V_{\bx}$. In particular, noticing  
$$
\sqrt{W^{(k)}}V_{\bx}=[\sqrt{\bw^{(k)}},X\sqrt{\bw^{(k)}},\dots,X^{n-1}\sqrt{\bw^{(k)}}],
$$
we can apply the Arnoldi process to generate the thin QR factorization $\sqrt{W^{(k)}}V_{\bx}=\Psi_{W^{(k)}}R_{W^{(k)}} $ of $\sqrt{W^{(k)}}V_{\bx}$, where the orthonormal matrix $\Psi_{W^{(k)}}$ is also the basis for the Krylov subspace ${\cal K}_{n}(\sqrt{\bw^{(k)}}, X)$ (see \cite[Theorem 2.1]{zhsl:2023}); the computational flops for forming $\Psi_{W^{(k)}}$ in the Arnoldi process is $O(mn)$ as $R_{W^{(k)}}$ is not explicitly used. 
Thus with the new vector $\tilde \ba=R_{W^{(k)}}\ba$, it holds that 
\begin{equation}\label{eq:LancWr}
\|\sqrt{W^{(k)}}\br^{(k)}\|_2^2=
\min_{\ba\in \bbC^n}\|\sqrt{W^{(k)}}\Bf-\sqrt{W^{(k)}}V_{\bx} \ba\|_2^2=\min_{\tilde\ba\in \bbC^n}\|\sqrt{W^{(k)}}\Bf-\Psi_{W^{(k)}}\tilde\ba\|_2^2
\end{equation}
where the optimal\footnote{In MATLAB, we can simply compute $\tilde\ba$ by the backslash: $\tilde\ba=\Psi_{W^{(k)}} \backslash(\sqrt{W^{(k)}}*\Bf)$.}
 $\tilde\ba^{(k)}=\Psi_{W^{(k)}}^\dag \sqrt{W^{(k)}}\Bf$  and thus the optimal residual of \eqref{eq:q-dual_q2} is 
\begin{equation}\label{eq:LancWr2}
\sqrt{W^{(k)}}\br^{(k)}= \sqrt{W^{(k)}}\Bf-\Psi_{W^{(k)}}\tilde\ba^{(k)}.
\end{equation}
With this technique for updating the weights, suppose $W^*=\diag(\bw^*)$ is the convergent weight matrix for \eqref{eq:dualmaxdw} and $\tilde \ba^*$ is the associated optimal solution related with \eqref{eq:LancWr} (correspondingly, we have $\sqrt{W^*}V_{\bx}=\Psi_{W^*}R_{W^*}$). Analogous to our discussion in section \ref{subsec_L1}, for new nodes $\{v_j\}_{j=1}^N\subseteq \Omega$, we can compute the new values $\bp_{\bv}=[p^*(v_1),\cdots,p^*(v_N)]^{\T}$ of the optimal approximation $p^*$ as $\bp_{\bv}=\Psi_{\bv}\tilde \ba^*$, where $\Psi_{\bv}$ is obtained by reversing the Arnoldi process (see \cite[Algorithm 1 and Figure 2.1]{zhsl:2023}) and is related with the Vandermonde matrix $V_{\bv}$ in \eqref{eq:vadms} via   $V_{\bv}=\Psi_{\bv} R_{W^*}$. For more detailed discussion of this issue, refer to \cite{brnt:2021,zhsl:2023}.

From the complementary slackness $w_i^*(|f_i-p^*(x_i)|-\eta_\infty^*)=0$ for all $1\le i\le m$ in Theorem \ref{thm:q-dual}, we can partition the nodes set $X$ as  
\begin{equation}\nonumber
X={\cal I}\bigcup {\cal E},~~{\cal I}\bigcap {\cal E}=\emptyset,
\end{equation}
where 
\begin{equation}\label{eq:AJ}
{\cal E}=\{x_i\in X:|f_i-p^*(x_i)|=\eta_\infty^*\}\quad \mbox{and}\quad {\cal I}=\{x_i\in X:|f_i-p^*(x_i)|<\eta_\infty^*\}.
\end{equation}
The subsets ${\cal E}$ and ${\cal I}$ are unknown a prior,  and Lawson's iteration \cite{laws:1961} and/or its restarted version \cite{elwi:1976} aims at computing $\{w_i^*\}$ and detecting a subset ${\cal \hat E}$ of ${\cal E}$ on which $p^*$ is also the best minimax approximation to $\Bf$. Specifically, in the original Lawson's iteration\cite{laws:1961}, the weights $w_i^{(k)}$ are updated according to step 3 of   Algorithm \ref{alg:lawson}. Define  subsets ${X}_1,X_2$ and ${X}_3$ of $[m]$ as 
\begin{align}\nonumber
{X}_1&=\{j: w_j^{(k)}\ge \nu>0 \mbox{ for infinitely many $k$ and some } \nu\},\\\nonumber
{X}_2&=\{j: \lim_{k\rightarrow \infty}w_j^{(k)}=0 \mbox{ and } w_j^{(k)}>0,~\forall k\},\\\nonumber
{X}_3&=\{j: j\in {\cal I} \mbox{ and } w_j^{(k)}>0,~\forall k\}.
\end{align}
It is shown \cite{laws:1961,clin:1972,rice:1969} that 
\begin{itemize}
\item[(i).] $\{d(\bw^{(k)})\}_{k=1}^\infty$ is monotonically increasing and convergent;
\item[(ii).]  $\lim_{k\rightarrow \infty}|w_i^{(k)}-w_i^{(k+1)}|=0$ for any $1\le i\le m$;
\item[(iii).] $p^{(k)}$  converges to a function $\tilde p\in \bbP_n$ which is the best Chebyshev approximation of $f(x)$ on ${X}_1\cup {X}_2$;
\item[(iv).] if $\tilde p=p^*$, then the convergence is linear with the linear convergence factor \cite{clin:1972}
\begin{equation}\label{eq:rho}
\rho=\frac{\max_{i\in X_3} |r_i^*|}{\eta^*_\infty}<1.
\end{equation}
\end{itemize}

Lawson's updating scheme generally is able to find the optimal $p^*$, but counterexamples exist in which the convergent $\tilde p\neq p^*$ (see e.g., \cite[Section 13-11]{rice:1969}). This mis-convergence happens when some critical points in ${\cal E}$  that determine the optimal $p^*$ are ``accidentally" but permanently set to zeros.  Restarting the weights is a possible remedy for this mis-convergence.
Also, even if  convergence to $p^*$ is guaranteed, the convergence can be very slow. This can be seen from the ratio $\rho$ \eqref{eq:rho} which is the largest non-maximum error of $p^*$ to the maximum error $\eta^*_\infty$. In the case when the nodes are relatively dense in  $\Omega$, or, there are too many nodes near some critical points 
in ${\cal E}$  that determine the optimal $p^*$, by the continuity of the optimal residual $\br(x)$, the numerator $\max_{i\in X_3} |r_i^*|$ of \eqref{eq:rho} can be sufficiently close to $\eta_\infty^*$, leading to a very slow convergence of the weights. One traditional remedy \cite{laws:1961,clin:1972,rice:1969}  is to set  certain weights to be zero as long as they are within some threshold $\epsilon_w$, which, however, is not guaranteed as a proper not sufficiently small threshold tolerance $\epsilon_w$ is also problem-dependent. Other remedies include modify the scheme  for updating the weights. For example, another simple updating formulation \cite{elwi:1976,rius:1968,rice:1969} is 
\begin{equation}\nonumber
w_j^{(k+1)}=\frac{w_j^{(k)}|r_j^{(k)}|^2}{\sum_{i=1}^m w_i^{(k)} |r_i^{(k)}|^2},  ~~\forall j=1,2,\dots,m,
\end{equation} 
or equivalently, 
$$
\bw^{(k+1)}=\frac{\bw^{(k)}\odot \nabla d(\bw^{(k)})}{\be^{\T}\left(\bw^{(k)}\odot \nabla d(\bw^{(k)})\right)}=\frac{\bw^{(k)}\odot \nabla d(\bw^{(k)})}{d(\bw^{(k)})},
$$
where $\odot$ is the Hadamard product and $d(\bw^{(k)})$ is the objective value of the dual in \eqref{eq:q-dual_q2}.
It is known that the sequence $\{d(\bw^{(k)})\}_{k=1}^\infty$ is also monotonically increasing \cite{elwi:1976} and convergent because 
\begin{align*}
d(\bw^{(k+1)})&=\frac{1}{d(\bw^{(k)})}\sum_{i=1}^m\left|\sqrt{w_i^{(k)}}\bar r_i^{(k)} r_i^{(k+1)}\right|^2\cdot \sum_{i=1}^m\left(\sqrt{w_i^{(k)}}\right)^2\\
&\ge \frac{1}{d(\bw^{(k)})}\left|\sum_{i=1}^mw_i^{(k)}\bar r_i^{(k)} r_i^{(k+1)}\right|^2\quad \quad \mbox{(by Cauchy-Schwartz inequality)}\\
&=\frac{1}{d(\bw^{(k)})}\left|\sum_{i=1}^mw_i^{(k)}|r_i^{(k)}|^2+\sum_{i=1}^mw_i^{(k)} \bar r_i^{(k)} (r_i^{(k+1)}-r_i^{(k)})\right|^2\\
&= \frac{1}{d(\bw^{(k)})}\left(\sum_{i=1}^mw_i^{(k)}|r_i^{(k)}|^2\right)^2~  \mbox{(by optimality $\sqrt{W^{(k)}}\br^{(k)}\perp \sqrt{W^{(k)}}\Psi$ of \eqref{eq:LancWr})}\\
&=d(\bw^{(k)}).
\end{align*}
However, different from $q=1$, the convergence of $p^{(k)}$ is not guaranteed \cite{laws:1961,rice:1969}. Indeed, whenever $d(\bw^{(k+1)})=d(\bw^{(k)})$, the Cauchy-Schwartz inequality ensures only 
$$
 \bar r_i^{(k)}  r_i^{(k+1)}=\mbox{constant}, \quad \forall i\in \{i:w_i^{(k)}>0\},
$$ 
and $\{|r_i^{(k)}|\}_{k=1}^\infty$ may behave  oscillatingly. As an illustration, in the following example, we demonstrate  behaviors of Lawson's iteration with the two updating schemes.

\begin{example}\label{eg:1}
We apply Lawson's iteration with updating $w_j^{(k+1)}=\frac{w_j^{(k)} |r_j^{(k)}|^q}{\sum_{i=1}^m w_i^{(k)} |r_i^{(k)}|^q}$ for $q=1$ and $q=2$ in the following test problem 
\begin{equation}\label{eq:eg1fun}
f_1(x)=\sin(20|x|x),~~  x_i=-1+\frac{i-1}{1000}\in [-1,1],~~i=1,\dots,2001
\end{equation}
and $\bbP_{16}={\rm span}(1,x,\dots,x^{15})$. The Lanczos process is applied to compute the residual $\br^{(k)}$ given in \eqref{eq:LancWr2} and the threshold tolerance for cutting off $w_i^{(k)}$ is $\epsilon_w={\tu}\approx2.2 \times 10^{-16}$, the machine precision in MATLAB.  In Figure \ref{fig:Example1}, we plot the original curve of $f(x)$ in $[-1,1]$ and the computed approximation curves in the top subfigure; also the history of the dual objective values $d(\bw^{(k)})$ as well as $\|\bw^{(k)}\|_\infty$ are given in the second and the third subfigure, respectively. It then is observed that both $q=1$ and $q=2$ lead to the monotonically  increasing convergence of $d(\bw^{(k)})$; but the one with $q=2$ converges to a non-optimal minimax approximation, and  the associated sequences of $\{\|\bw^{(k)}\|_\infty\}_{k=1}^\infty$ and $\{\|\br^{(k)}\|_\infty\}_{k=1}^\infty$ behave  oscillatingly.
 \begin{figure}[thb!!!]
\hskip-12mm 
	\includegraphics[width=1.17\linewidth,height=0.65\textheight]{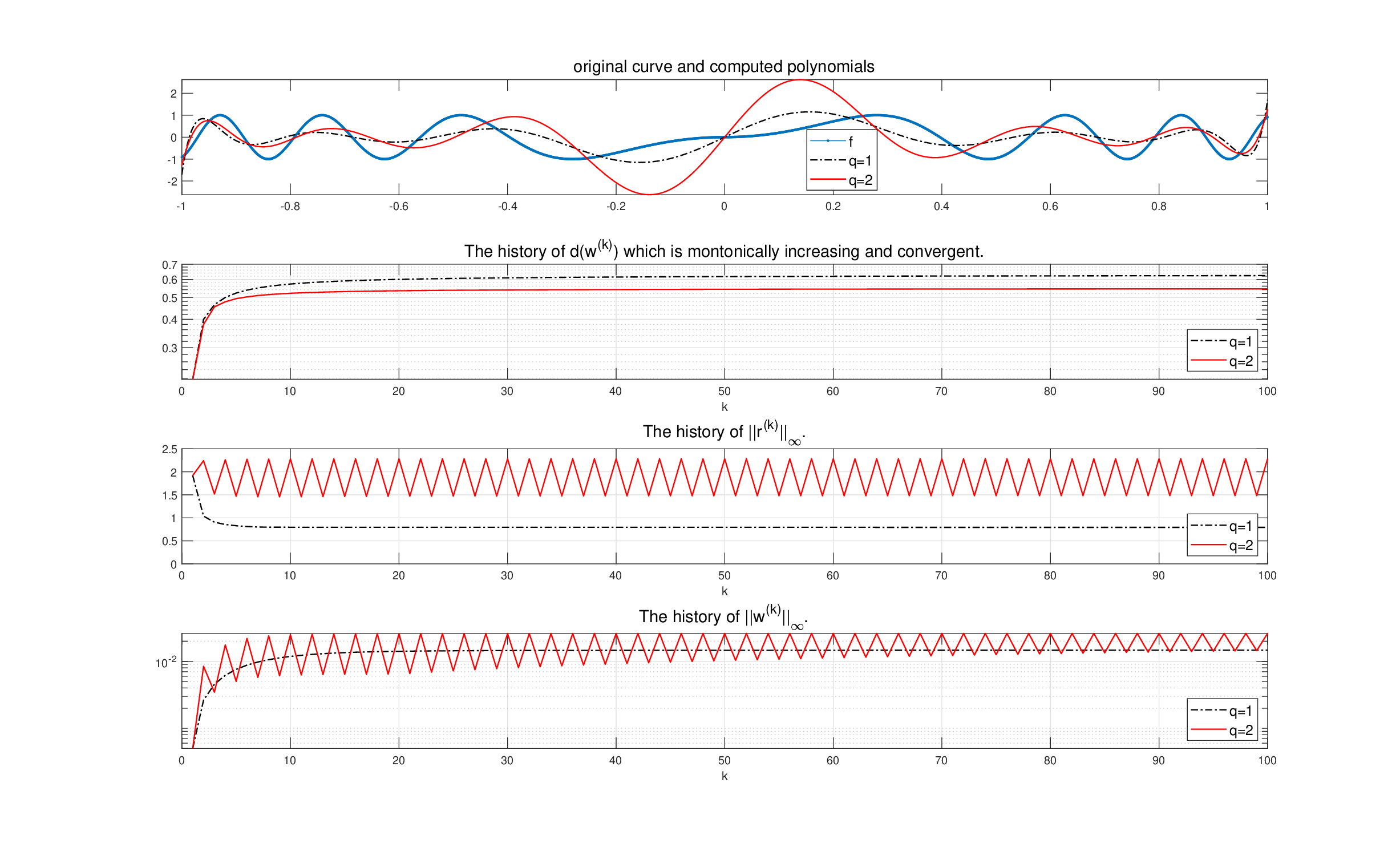}
	\vskip -3mm
\caption{\small From top to bottom: approximation polynomials,  $d(\bw^{(k)})$, $\|\br^{(k)}\|_\infty$ and $\|\bw^{(k)}\|_\infty$ from Lawson's iteration.}
\label{fig:Example1}
\end{figure}
\end{example}


\section{An interior-point method for the $L_2$-weighted dual programming}\label{sec:IP}
Recall that the linear convergence factor $\rho$ in \eqref{eq:rho}  reflects the slowest rate at which $w_i^{(k)}$ for $i\in X_3$ converges to zero. As we have pointed out that this convergence  can be slow when some nodes are sufficiently close  near some critical points in ${\cal E}$  that determine the optimal $p^*$. To fasten the convergence of $\bw^{(k)}$, we propose an interior-point method to solve the convex dual problem \eqref{eq:dualmaxdw}.

Note that for any feasible point  $\bw\in S$ of \eqref{eq:dualmaxdw}, the linear
independence constraint qualification (LICQ, see \cite[Definition 12.4]{nowr:2006}) holds. 
Following the interior-point method for the general nonlinear programming \cite[Chapter 19]{nowr:2006}, a derivation of the interior-point method is through the barrier problem of \eqref{eq:dualmaxdw}:
\begin{equation}\label{eq:dualbarrier}
\min_{\bw,\bs} -d(\bw)-\mu \sum_{i=1}^m\log s_i,~~s.t., ~\bw^{\T}\be =1,~ \bw-\bs = \mathbf{0}
\end{equation}
where $\mu>0$ is a barrier parameter and $\bs=[s_1,\dots,s_m]^{\T}$. Introduce $S=\diag(\bs)$ and variables $y\in \bbR,\bz\in \bbR^m$, and the KKT conditions for \eqref{eq:dualbarrier} read as
\begin{align}\label{eq:KKTIP} 
\left\{\begin{array}{cc}-\nabla d(\bw)-\be y-\bz&=\mathbf{0} \\S\bz-\mu \be &=\mathbf{0}\\
\bw^{\T}\be -1 &=0\\
\bw-\bs&=\mathbf{0}\end{array}\right.
\end{align}
for $(\bw(\mu),\bs(\mu),y(\mu),\bz(\mu))\in \bbR^{m}\times \bbR^{m}\times \bbR\times \bbR^{m}$, which forms the so-called central path for the primal-dual system \eqref{eq:KKTIP} with respect to the barrier parameter $\mu$. Note that the natural logarithm penalty function $\log s_i$ and the constraints $\bw-\bs=\mathbf{0}$ implicitly impose the positive constraints $\bw>0$. Indeed, by eliminating the variables $\bs$ and $\bz$, we can see that \eqref{eq:KKTIP} is equivalent to
\begin{align}\label{eq:KKTIP2} 
\left\{\begin{array}{cc}-\nabla d(\bw)-\be y-\mu W^{-1}\be&=\mathbf{0}\\
\bw^{\T}\be -1 &=0
\end{array}\right.
\end{align}
which is the KKT for the barrier problem
\begin{equation}\label{eq:dualbarrier2}
\min_{\bw^{\T}\be =1} -d(\bw)-\mu \sum_{i=1}^m\log w_i.
\end{equation}
Note that for any $\mu>0$, the objective function of \eqref{eq:dualbarrier2} is strictly convex and thus the KKT system \eqref{eq:KKTIP2} has a unique solution. Indeed, by further eliminating the second equality of \eqref{eq:KKTIP2} into the first, we have the following nonlinear system for the primal central path $\bw(\mu)$:
\begin{equation}\nonumber
-\nabla d(\bw)+\be \bw^{\T}\nabla d(\bw)+m\mu \be -\mu W^{-1}\be=\mathbf{0}.
\end{equation}

  
Computationally, we prefer the system \eqref{eq:KKTIP} as approximations of the auxiliary variables $\bs,y,\bz$ can provide information on the accuracy on how the current iterate is close to the central path. To derive the Newton step for $[\bw,\bs,y,\bz]$, we note that as long as the initial $\bw^{(0)}=\bs^{(0)}$, the Newton system of \eqref{eq:KKTIP} implies the Newton directions of $\bw$ and $\bs$ are equal; therefore, we set $\bw=\bs$ in \eqref{eq:KKTIP}  to have 
\begin{align}\label{eq:KKTIP3} 
\bk(\bw,y,\bz;\mu):=\left[ \begin{array}{cc}-\nabla d(\bw)-\be y-\bz \\W\bz-\mu \be \\
\bw^{\T}\be -1 \end{array} \right]=\mathbf{0}.
\end{align}
Denote by $\bw=\bw^{(k)}, y=y^{(k)}$ and $\bz=\bz^{(k)}$ the approximations at the $k$th iteration and let  
$[\bn_w^{\T},n_y,\bn_z^{\T}]^{\T}\in \bbR^{2m+1}$ be the Newton direction of \eqref{eq:KKTIP3}, and we have 
\begin{equation}\label{eq:Newtondir}
\left[ \begin{array}{ccc}-\nabla^2 d(\bw)&\be&I_m \\\be^{\T}&0&0  \\
I_m&0&-\Sigma^{-1} \end{array} \right]\left[\begin{array}{c}\bn_w\\-n_y \\ -\bn_z\end{array}\right]= -\left[\begin{array}{c}-\nabla d(\bw)-\be y-\bz \\\bw^{\T}\be -1 \\\bw-\mu Z^{-1}\be\end{array}\right]
\end{equation}
where $\Sigma=W^{-1}Z$ and $Z=\diag(\bz)$. With $[\bn_w^{\T},n_y,\bn_z^{\T}]^{\T}$, the Newton iteration updates $\bw,y$ and $\bz$ by
\begin{equation}\label{eq:Newtonupd}
\bw^{(k+1)}=\bw +\alpha_w^{\max}\bn_w, ~~y^{(k+1)}=y+\alpha_z^{\max}n_y,~~\bz^{(k+1)}=\bz +\alpha_z^{\max}\bn_z,
\end{equation}
where 
\begin{align*}\label{eq:alphawz}
\alpha_w^{\max}&=\max\{\alpha\in (0,1]: \bw+\alpha \bn_w\ge (1-\tau)\bw\},\\
\alpha_z^{\max}&=\max\{\alpha\in (0,1]: \bz+\alpha \bn_z\ge (1-\tau)\bz\}
\end{align*}
with $\tau\in (0,1)$, a parameter that prevents $\bw$ and $\bz$ from approaching their bounds too quickly.

Solving the system \eqref{eq:Newtondir} can be simplified. First, by eliminating $\bn_z$ as 
\begin{equation}\label{eq:nz}
\bn_z=-\bz+\mu W^{-1}\be -\Sigma \bn_w,
\end{equation}
we have 
\begin{equation}\nonumber
\left[ \begin{array}{cc}-\nabla^2 d(\bw)+\Sigma&\be\\\be^{\T}&0\end{array} \right]\left[\begin{array}{c}\bn_w\\-n_y \end{array}\right]= -\left[\begin{array}{c}\bh_1(\bw,y,\bz) \\\bh_2(\bw,y,\bz)\end{array}\right]
\end{equation}
where 
$$\bh_1=\bh_1(\bw,y,\bz):=-\nabla d(\bw)-\be y -\mu W^{-1}\be,~~\bh_2=\bh_2(\bw,y,\bz):=\bw^{\T}\be -1.$$
Consequently, 
\begin{align}\label{eq:Newtondir3w} 
\bn_w&=(-\nabla^2 d(\bw)+\Sigma)^{-1}(n_y \be -\bh_1),\\\label{eq:Newtondir3y} 
n_y&=\frac{\be^{\T}(-\nabla^2 d(\bw)+\Sigma)^{-1} \bh_1-\bh_2}{\be^{\T}(-\nabla^2 d(\bw)+\Sigma)^{-1} \be}.
\end{align}
Note that there are two linear systems involved for computing the Newton direction, namely, 
$$\bv_e=(-\nabla^2 d(\bw)+\Sigma)^{-1}\be, ~~\mbox{and}~~\bv_h=(-\nabla^2 d(\bw)+\Sigma)^{-1}\bh_1.$$
Although both are related with an $m$-by-$m$ coefficient matrix $(-\nabla^2 d(\bw)+\Sigma)^{-1}$, we will show that by relying on the Hessian \eqref{eq:hess_dw_orth}, $\bv_e$ and $\bv_h$ can be computed with flops $O(n^3)$ using  the Sherman-Morrison-Woodbury formula (see e.g., \cite[Equ. (2.1.4)]{govl:2013}). 
To this end, let $W^{\frac12}\Psi=\Psi_WR_W ~(\Psi_W\in \bbC^{m\times n},~R_W\in \bbC^{n\times n})$ be the thin QR decomposition of $W^{\frac12}\Psi$, and write $$\Psi_W^{\HH}R=K^{\tt r}+{\texttt{i}}K^{\tt i}\in \bbC^{n\times m},~~R=\diag{(\br)},~~ K^{\tt r}={\rm Re}(\Psi_W^{\HH}R),~ K^{\tt i}={\rm Im}(\Psi_W^{\HH}R)\in \bbR^{n\times m}$$
to have 
\begin{equation}\label{eq:mediHess}
{\rm Re}(R^{\HH}\Psi_W \Psi_W^{\HH}R)=(K^{\tt r})^{\T}K^{\tt r}+(K^{\tt i})^{\T}K^{\tt i}=K^{\T}K,~~K=\left[\begin{array}{c} K^{\tt r}\\K^{\tt i}\end{array}\right]\in \bbR^{2n\times m}.
\end{equation}
Then from \eqref{eq:hess_dw_orth} and $\Sigma=W^{-1}Z$, it holds 
\begin{align}\nonumber
(-\nabla^2 d(\bw)+\Sigma)^{-1}&=\left(2W^{-\frac12} {\rm Re}(R^{\HH}\Psi_W \Psi_W^{\HH}R)W^{-\frac12}+\Sigma\right)^{-1}
\\\nonumber
&=\left(2W^{-\frac12} K^{\T}KW^{-\frac12}+\Sigma\right)^{-1}\\
&=\Sigma^{-1}-2(\Sigma Z)^{-\frac12}K^{\T}\left(I+2KZ^{-1}K^{\T}\right)^{-1}K(\Sigma Z)^{-\frac12};\label{eq:SMW}
\end{align}
thus computing $\bv_e$ and $\bv_h$ only need to solve two linear systems associated with a symmetric and positive definite  coefficient matrix $\left(I+2KZ^{-1}K^{\T}\right)^{-1}\in \bbR^{2n\times 2n}$.  For the real case,   it is worth pointing out that $K$ in \eqref{eq:mediHess} is an $n$-by-$m$ matrix and thus  $\left(I+2KZ^{-1}K^{\T}\right)^{-1}\in \bbR^{n\times n}$.
Furthermore, in the case of  $\bbP_n={\rm span}(1,x,\cdots,x^{n-1})$,  the Hessian \eqref{eq:hess_dw_orth} can be formed by the Arnoldi  process in $O(mn)$ flops (refer to our discussion in section \ref{subsec_revistLawson}), and hence,  computing a Newton step of \eqref{eq:Newtonupd} can be done in $O(mn)+O(n^3)$ flops.

For the barrier parameter $\mu^{(k)}$, we employ an adaptive strategy described in \cite[Section 19.3]{nowr:2006} in which 
\begin{equation}\label{eq:muk}
\mu^{(k+1)} =\sigma^{(k)}\frac{(\bw^{(k)})^{\T}\bz^{(k)}}{m}
\end{equation} 
where
$$
\sigma^{(k)}=0.1\min\left(\frac{1-\xi^{(k)}}{20\xi^{(k)}},2\right)^3 \mbox{ with }~\xi^{(k)}=\frac{\min_i\bw_i^{(k)}\bz_i^{(k)}}{(\bw^{(k)})^{\T}\bz^{(k)}/m}.
$$
With these techniques, we can now present the algorithm framework of the interior-point method in Algorithm \ref{alg:IP}. We refer to \cite[Theorem 19.1]{nowr:2006} for the convergence and \cite[Section 19.8]{nowr:2006} for the superlinear convergence rate of the interior-point method for the general nonlinear programming.

\begin{algorithm}[t]
\caption{An interior-point method for \eqref{eq:bestf}} \label{alg:IP}
\begin{algorithmic}[1]
\renewcommand{\algorithmicrequire}{\textbf{Input:}}
\renewcommand{\algorithmicensure}{\textbf{Output:}}
\REQUIRE $\{(x_j,f_j)\}_{j=1}^m$ and $0\le n+1\le m$ and a function basis $\{\psi_i\}_{i=1}^n$ for $\bbP_n$
\ENSURE  the (approximation) solution $p^*\in \bbP_n$ of \eqref{eq:bestf} 
        \smallskip

\STATE  Let $k=0$ and chose $\tau\in(0,1)$,   $0<\bw^{(0)}\in S$ and $\bz^{(0)}>0$ and a  threshold tolerance $\epsilon_w$ for the weights;
\STATE (optional) Remove nodes $x_i$ with $\bw_i^{(k)}<\epsilon_w$;
\STATE Solve \eqref{eq:q-dual_q2} with the weight vector $\bw^{(k)}$ to obtain  $d(\bw^{(k)})$,  $\nabla d(\bw^{(k)})$ \eqref{eq:grad_d} and $\nabla^2 d(\bw^{(k)})$ \eqref{eq:hess_dw_orth};
\STATE  Compute $\bn_w,n_y$ of \eqref{eq:Newtondir3w}  and \eqref{eq:Newtondir3y}, respectively, by using \eqref{eq:SMW}, and then compute $\bn_z$ by \eqref{eq:nz};
\STATE Update $\bw^{(k+1)},y^{(k+1)}$ and $\bz^{(k+1)}$ according to \eqref{eq:Newtonupd}, and update $\mu^{(k+1)}$ by \eqref{eq:muk};
 
\STATE Stop if the given criterion is met and the approximation $p^*$ is obtained from \eqref{eq:q-dual_q2} with the weight $\bw^{(k+1)}$; otherwise $k=k+1$ and goto step 2.
\end{algorithmic}
\end{algorithm}
We have a few remarks for Algorithm \ref{alg:IP}.
\begin{remark}\label{rmk:IP}
\begin{itemize}
\item[(1)] The nature of the logarithm barrier  in \eqref{eq:dualbarrier} can prevent ``accidentally'' 
but permanently setting certain critical weights to zeros; on the other hand, as the interior-point employs the Newton direction, the fast convergence  is able to quickly deduce non-critical weights corresponding to nodes in ${\cal J}$ \eqref{eq:AJ} to zero. This is observed in our numerical experiments. In order to handle the sequential systems more effectively, we include a weight-filtering option in step 2 of Algorithm \ref{alg:IP} to remove  nodes whose corresponding weights are smaller than a sufficiently small  threshold tolerance $\epsilon_w$. Such a weight-filtering option can also be incorporated into Lawson's iteration  before Step 2 of  Algorithm \ref{alg:lawson}, and our numerical tests in section \ref{sec:numer} will evaluate the effectiveness of this procedure.

\item[(2)] For the stopping criterion in step 6, we choose to terminate the iteration whenever one of the following inequalities is fulfilled:
\begin{equation}\label{eq:stoprule}
\frac{\left|d(\bw^{(k+1)})-d(\bw^{(k)})\right|}{d(\bw^{(k)})}<\epsilon_d,~~\left\|\bk(\bw^{(k)},y^{(k)},\bz^{(k)};\mu^{(k)})\right\|_\infty<\epsilon_K,~~k>k_{\max}
\end{equation}
where $\left\|\bk(\bw^{(k)},y^{(k)},\bz^{(k)};\mu^{(k)})\right\|_\infty$ is the error of the perturbed KKT given in \eqref{eq:KKTIP3}.
\item[(3)] It is known that the interior-point method is an efficient method for the equivalent linear programming \eqref{eq:L1LP-b} for the real case, but \eqref{eq:L1LP-b} is inapplicable for the complex case. Even for the real case,  \eqref{eq:L1LP-b} doubles the constraints (note that the slack  variable $\bs$ in \eqref{eq:L1LP-bs} is of size $2m$); also, the interior-point method for \eqref{eq:dualmaxdw} can filter out non-critical nodes and dynamically determine the equioscillation points  in Theorem \ref{thm:Alternation} during the course of iterations. We will present numerical examples in section \ref{sec:numer}.
\end{itemize}
\end{remark}
\section{Numerical experiments}\label{sec:numer}
We implement Algorithm \ref{alg:IP} in 
MATLAB R2022a and carry out numerical tests on a DELL Inspiron 7700 AIO with 16 GB 2666 MHz DDR4 and the unit machine roundoff $\tu=2^{-52}\approx 2.2\times 10^{-16}$. 
Particular parameters for the stopping rule \eqref{eq:stoprule} are set as $\epsilon_d = \epsilon_K=10^{-10}$. Also, the initial barrier parameter $\mu^{(0)}=10^{-5}$ and the initial weight vector is  $\bw^{(0)}=\be/m$.

\subsection{Numerical evaluations on real cases}\label{subsec:real}
\begin{example}\label{eg:2}
We first present the numerical behavior of the interior-point method (Algorithm \ref{alg:IP}) by comparing the classical Lawson's iteration for the function \eqref{eq:eg1fun} in Example \ref{eg:1}. In order to evaluate how fast the weights converge, we inactivate the filtering procedure for the weights by setting the threshold tolerance $\epsilon_w=0$ for both the interior-point method and Lawson's iteration. The interior-point method  (Algorithm \ref{alg:IP}) stops at $k=18$, while Lawson's iteration does not meet the stopping rule with $\epsilon=10^{-10}$ in step 4 of Algorithm \ref{alg:lawson} for $k=100$. In Figure \ref{fig:Example2}, we plot the approximation polynomials, the sequences of  $\{d(\bw^{(k)})\}$ and  $\{\|\bw^{(k)}\|_\infty\}$. It can be seen that the interior-point method converges fast; more interestingly, we notice that the largest weight of $w_i^{(k)}$ from the interior-point method quickly reaches the limit, but the one from Lawson's iteration converges very slowly; as $\be^{\T}\bw^{(k)}=1$, the slow convergence of $\{\|\bw^{(k)}\|_\infty\}$ implies that no node dominants significantly, and therefore, the equioscillation points  in Theorem \ref{thm:Alternation} that determine the best approximation $p^*$ do not emerge quickly. To demonstrate this fact, we plot the weights from the interior-point method and Lawson's iteration for various maximal numbers of iterations in Figure \ref{fig:Example3}. One can easily see that $18$ equioscillation points associated with $\bbP_{16}$ emerge in the interior-point method, but the weights from Lawson's iterations scatter. Notice that near each equioscillation point, there are many nodes whose weights decay continuously but slowly, and thus, it is hard to distinguish the $18$ equioscillation points in Lawson's iteration. To extend the test, we apply the two methods for finding the best approximation $p^*$ for the function \eqref{eq:eg1fun} in Example \ref{eg:1} on $\bbP_{21}$. It turns out that the interior-point method stops at $k=20$ and finds the exactly $22$ equioscillation points. The numerical behaviors are given in Figure \ref{fig:Example3b}.
\begin{figure}[h!!!]
\hskip-12mm 
	\includegraphics[width=1.18\linewidth,height=0.6\textheight]{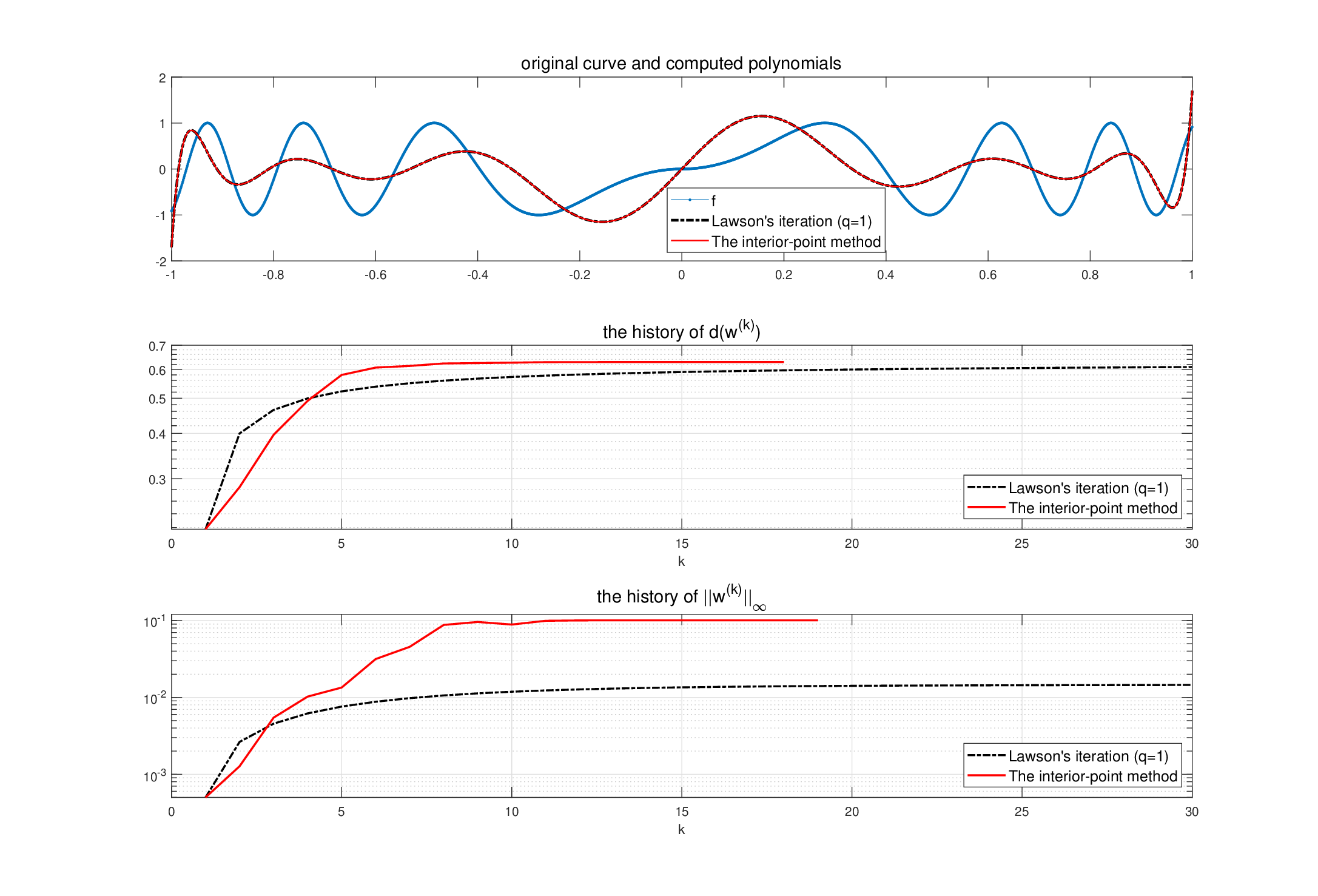}
	\vspace{-1.5cm}
\caption{\small From top to bottom: approximation polynomials,  $d(\bw^{(k)})$  and $\|\bw^{(k)}\|_\infty$ from Lawson's iteration and the interior-point method (Algorithm \ref{alg:IP}).}
\label{fig:Example2}
\end{figure}

 \begin{figure}[h!!!]
\hskip-13mm
	\includegraphics[width=1.18\linewidth,height=0.48\textheight]{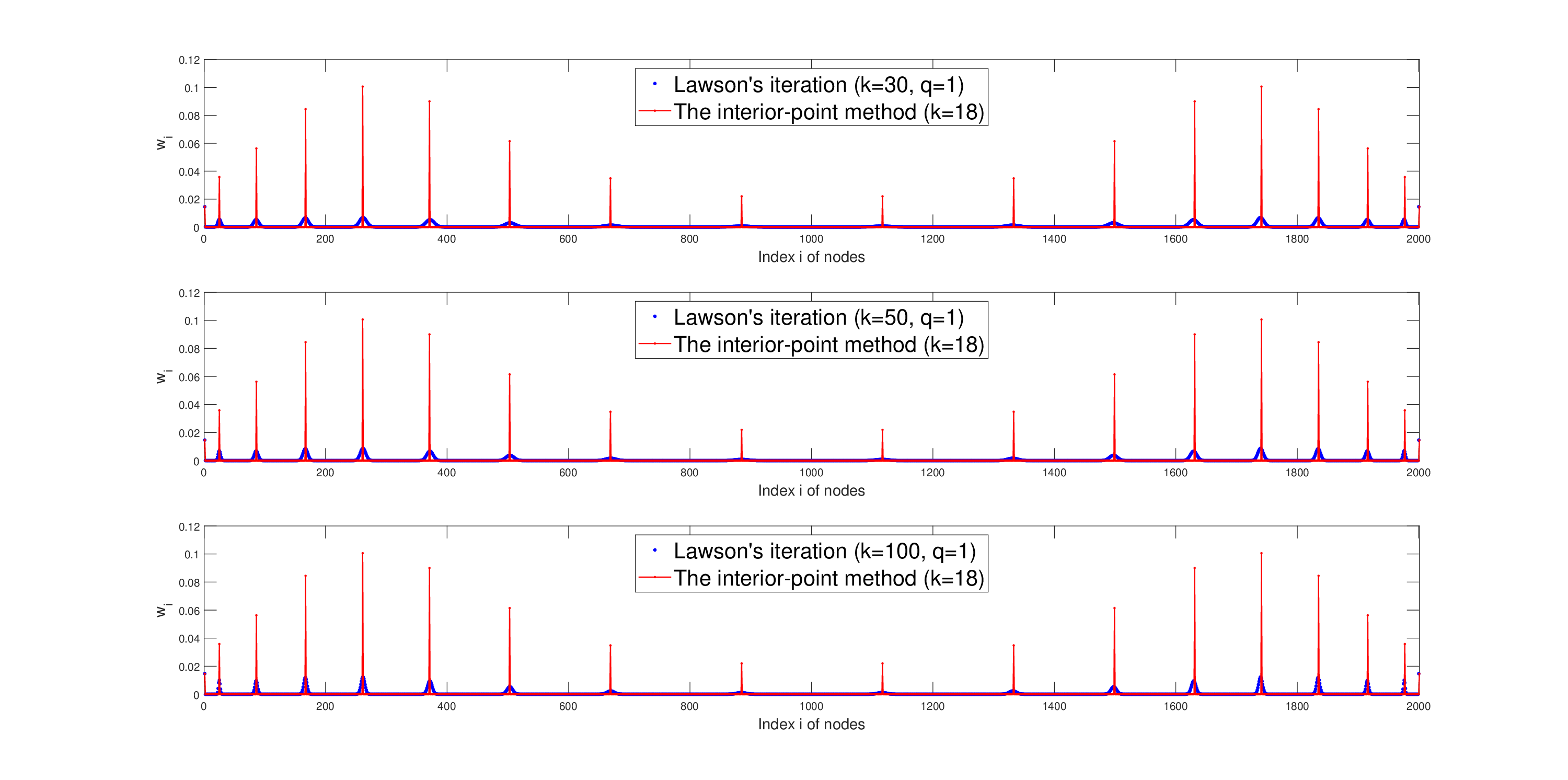}
	\vspace{-1.5cm}
	\caption{Weights $w_i$ for $1\le i\le m$ from Lawson's iteration (top: $k=30$, middle: $k=50$, bottom: $k=100$) and the interior-point method ($k=18$) for Example \ref{eg:1} on $\bbP_{16}$.}
\label{fig:Example3}
\end{figure}

 \begin{figure}[h!!!]
\hskip-12mm
	\includegraphics[width=1.18\linewidth,height=0.48\textheight]{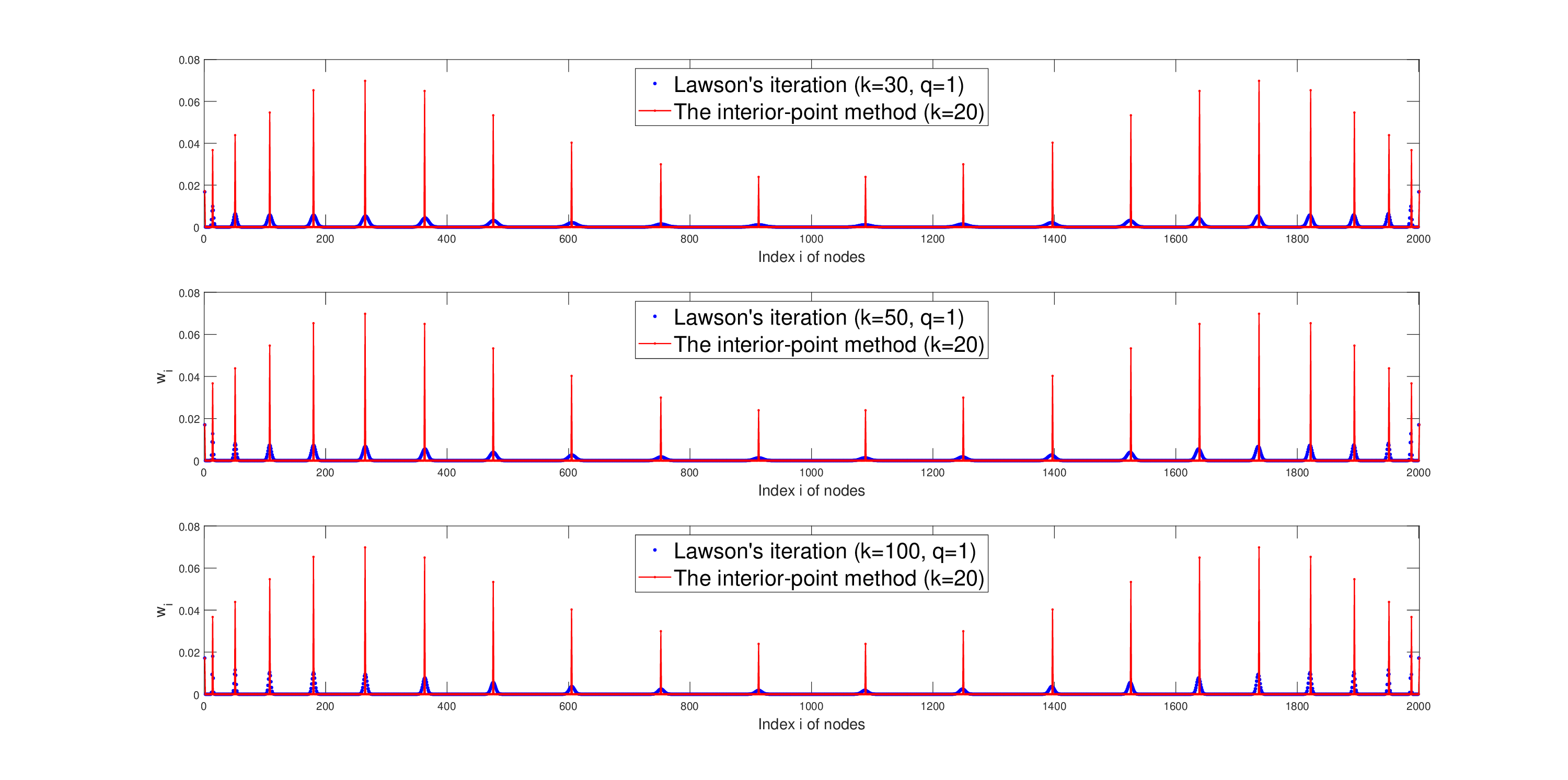}
	\caption{Weights $w_i$ for $1\le i\le m$ from Lawson's iteration (top: $k=30$, middle: $k=50$, bottom: $k=100$) and the interior-point method ($k=20$) for Example \ref{eg:1} on $\bbP_{21}$.}
\label{fig:Example3b}
\end{figure}
\end{example}

\begin{example}\label{eg:3}
In this example, we shall activate the filtering procedure (i.e., step 2 in Algorithm \ref{alg:IP}) for the weights by setting various values for the threshold tolerance $\epsilon_w$ for both the interior-point method and Lawson's iteration.  Besides the test function $f_1(x)$ given in \eqref{eq:eg1fun}, we provide numerical results for the well-known Runge function 
\begin{equation}\label{eq:eg2fun}
f_2(x)=\frac{1}{1+25x^2},~~  x_i=-1+\frac{i-1}{1000}\in [-1,1],~~i=1,\dots,2001.
\end{equation}
With different levels of threshold tolerance $\epsilon_w$ on $\bbP_{21}$ and $\bbP_{31}$, in each step, we remove the nodes for which the associated weights are less than $\epsilon_w$. For both methods, we report on the number $k$ of iterations, the final maximal modulus error $\Vert \br^{(k)} \Vert_{\infty}$, the final dual objective function value $d(\bw^{(k)})$, the number of final nodes and the consumed CPU (in second) in Table \ref{tab:eg3}. Again, it is observed that the interior-point method also performs well with this weight-filtering  procedure.
\begin{table}[H]
\newcommand{\tabincell}[2]{\begin{tabular}{@{}#1@{}}#2\end{tabular}}
\renewcommand{\arraystretch}{1}
\caption{Numerical results in Example \ref{eg:2} and Example \ref{eg:3}}
\begin{center}
{\begin{scriptsize} \tabcolsep0.08in
\begin{tabular}{|c||c|c|c|c|c|c|}
      \hline
      &\multicolumn{6}{|c|}{\tabincell{c}{Test on $f_1$ \eqref{eq:eg1fun}  on $\bbP_{21}$ and $m=2001$}}\\
      \hline
       $\epsilon_w$          & {\rm  Methods} & {\rm \# of iterations $k$} & {\tt $\Vert \br^{(k)} \Vert_{\infty}$} & {\tt $d(\bw^{(k)})$} & {\rm \# of final nodes} & {\rm CPU (s) } \\
      \hline
      \bf ${10^{-6}}/{m}$    & \tabincell{c}{Lawson's iteration \\ Interior-point method} & \tabincell{c}{1000\\22} & \tabincell{c}{3.4238e-1\\3.4235e-1} & \tabincell{c}{1.1710e-1\\1.1720e-1} & \tabincell{c}{232\\ {\bf 22}} & \tabincell{c}{0.38\\0.05} \\
      \hline
      \bf ${10^{-5}}/{m}$    & \tabincell{c}{Lawson's iteration \\ Interior-point method} & \tabincell{c}{1000\\21} & \tabincell{c}{3.4238e-1\\3.4235e-1} & \tabincell{c}{1.1710e-1\\1.1720e-1} & \tabincell{c}{218\\{\bf 22}} & \tabincell{c}{0.34\\0.05}\\
      \hline
      \bf ${10^{-4}}/{m}$    & \tabincell{c}{Lawson's iteration \\ Interior-point method} & \tabincell{c}{1000\\20} & \tabincell{c}{3.4238e-1\\3.4235e-1} & \tabincell{c}{1.1710e-1\\1.1720e-1} & \tabincell{c}{206\\{\bf 22}} & \tabincell{c}{0.27\\0.09}\\
      \hline
      &\multicolumn{6}{|c|}{\tabincell{c}{Test on $f_1$ \eqref{eq:eg1fun}   on $\bbP_{31}$ and $m=2001$}}\\
      \hline
      $\epsilon_w$           & {\rm  Methods} & {\rm \# of iterations $k$} &  {\tt $\Vert \br^{(k)} \Vert_{\infty}$} & {\tt $d(\bw^{(k)})$}  & {\rm \# of final nodes} & {\rm CPU (s) } \\
      \hline
      \bf ${10^{-6}}/{m}$    & \tabincell{c}{Lawson's iteration \\ Interior-point method} & \tabincell{c}{1000\\26} & \tabincell{c}{7.6031e-3\\7.6028e-3} &
      \tabincell{c}{ 5.7750e-5\\5.7802e-5} & \tabincell{c}{234\\{\bf 32}} & \tabincell{c}{0.63\\0.09}\\
      \hline
      \bf ${10^{-5}}/{m}$    & \tabincell{c}{Lawson's iteration \\ Interior-point method} & \tabincell{c}{1000\\26} & \tabincell{c}{7.6031e-3\\7.6028e-3} &
      \tabincell{c}{ 5.7750e-5\\5.7802e-5} & \tabincell{c}{222\\ {\bf 32}} & \tabincell{c}{0.48\\0.08}\\
      \hline
      \bf ${10^{-4}}/{m}$    & \tabincell{c}{Lawson's iteration \\ Interior-point method} & \tabincell{c}{1000\\25} & \tabincell{c}{7.6031e-3\\7.6028e-3} &
      \tabincell{c}{ 5.7750e-5\\5.7802e-5} & \tabincell{c}{206\\{\bf 32}} & \tabincell{c}{0.63\\0.02}\\
      \hline 
      \hline
      & \multicolumn{6}{|c|}{\tabincell{c}{Test on $f_2$ \eqref{eq:eg2fun} on $\bbP_{21}$ and $m=2001$}}\\
      \hline
      $\epsilon_w$           & {\rm  Methods} & {\rm \# of iterations $k$} & {\tt $\Vert \br^{(k)} \Vert_{\infty}$} & {\tt $d(\bw^{(k)})$} & {\rm \# of final nodes} & {\rm CPU (s) } \\
      \hline
      \bf ${10^{-6}}/{m}$    & \tabincell{c}{Lawson's iteration \\ Interior-point method} & \tabincell{c}{1000\\21} & \tabincell{c}{2.8474e-1\\2.8473e-1} & \tabincell{c}{8.1000e-2\\ 8.1074e-2} & \tabincell{c}{236\\{\bf 22}} & \tabincell{c}{0.31\\0.02}\\
      \hline
      \bf ${10^{-5}}/{m}$    & \tabincell{c}{Lawson's iteration \\ Interior-point method} & \tabincell{c}{1000\\21} & \tabincell{c}{2.8474e-1\\2.8473e-1} & \tabincell{c}{8.1000e-2\\ 8.1074e-2} & \tabincell{c}{222\\{\bf 22}} & \tabincell{c}{0.31\\0.03}\\
      \hline
      \bf ${10^{-4}}/{m}$    & \tabincell{c}{Lawson's iteration \\ Interior-point method} & \tabincell{c}{1000\\20} & \tabincell{c}{2.8474e-1\\2.8473e-1} & \tabincell{c}{8.1000e-2\\ 8.1074e-2} & \tabincell{c}{204\\{\bf 22}} & \tabincell{c}{0.25\\0.05}\\
      \hline
      & \multicolumn{6}{|c|}{ \tabincell{c}{Test on $f_2$ \eqref{eq:eg2fun}  on $\bbP_{31}$ and $m=2001$}}\\
      \hline
      $\epsilon_w$           & {\rm  Methods} & {\rm \# of iterations $k$} & {\tt $\Vert \br^{(k)} \Vert_{\infty}$} & {\tt $d(\bw^{(k)})$} & {\rm \# of final nodes} & {\rm CPU (s) } \\
      \hline
      \bf ${10^{-6}}/{m}$    & \tabincell{c}{Lawson's iteration \\ Interior-point method} & \tabincell{c}{1000\\23} & \tabincell{c}{4.1257e-2\\4.1255e-2} & \tabincell{c}{1.7005e-3\\ 1.7020e-3} & \tabincell{c}{236\\{\bf 32}} & \tabincell{c}{0.53\\0.02}\\
      \hline
      \bf ${10^{-5}}/{m}$    & \tabincell{c}{Lawson's iteration \\ Interior-point method} & \tabincell{c}{1000\\22} & \tabincell{c}{4.1257e-2\\4.1255e-2} & \tabincell{c}{1.7005e-3\\ 1.7020e-3} & \tabincell{c}{224\\{\bf 32}} & \tabincell{c}{0.58\\0.05}\\
      \hline
      \bf ${10^{-4}}/{m}$    & \tabincell{c}{Lawson's iteration \\ Interior-point method} & \tabincell{c}{1000\\21} & \tabincell{c}{4.1257e-2\\4.1255e-2} & \tabincell{c}{1.7005e-3\\ 1.7020e-3 }& \tabincell{c}{200\\{\bf 32}} & \tabincell{c}{0.27\\0.03}\\
      \hline
     \end{tabular}
\end{scriptsize}
}\end{center}
\label{tab:eg3}
\end{table}
\end{example}

\subsection{Numerical evaluations on complex cases}\label{subsec:complex}
We now test the interior-point method for some complex cases. 
\begin{example}\label{eg:4}
We first choose  the following complex-valued  function
\begin{equation}\label{eq:eg1Cfun}
	g_1(z) = (2z+1)^{-1/2},~ z_k=e^{-\frac{\pi}{2}{\tt i}+\frac{(k-1)\pi {\tt i}}{2000}}, k =1,\dots,2001
\end{equation}
with $m=2001$ uniform nodes on the semi-disc and   $\bbP_9=\{1,z,\dots,z^8\}$. 

We first inactivate the filtering procedure with the threshold tolerance $\epsilon_w=0$. The interior-point method stops at $k=24$ in $0.09$s while Lawson's iteration does not meet the stopping rule  within the maximal iteration number $k=100$ (see the middle subfigure of Figure \ref{fig:Example4a}). 
At the top of Figure \ref{fig:Example4}, we plot the maximum modulus error circle (the red circle) and the error curve $g_1(z)-p^*(z)$ at samples (the blue points). It is observed that the error curve touches the red circle 10 times; also it can be seen that the radius of interior-point method is smaller than Lawson's iteration for all chosen $\epsilon_w$ (see Table \ref{tab:eg45}). We also plot  the approximation polynomial at the bottom of Figure  \ref{fig:Example4}.
The history of $\{d(\bw^{(k)})\},\{\Vert \br^{(k)}\Vert_{\infty}\}$ and the final weights $\{w_i\}_{i=1}^{2001}$ are shown in Figure \ref{fig:Example4a}. One can see from the top of Figure \ref{fig:Example4a} that the interior-point method converges much faster and the computed dual objective function is  larger than that of the Lawson's iteration.  Most interestingly,  the bottom of Figure \ref{fig:Example4a} indicates that the interior-point method is capable of finding the exactly $10$ reference points that support the best polynomial; this is difficult for Lawson's iteration (see Figure \ref{fig:Example4a} and Table \ref{tab:eg45}). More detailed information on the iteration number $k$ (with $k_{\max}=1000$),  the final error $\Vert \br^{(k)} \Vert_{\infty}$, the final dual objective function value $d(\bw^{(k)})$, the number of remained nodes and the consumed CPU (in second) are reported in Table \ref{tab:eg45} with various levels of threshold $\epsilon_w$ on $\bbP_{9}$ and $\bbP_{16}$.

\begin{figure}[tbp!!!]
	\centering
	\includegraphics[width=0.9\linewidth,height=0.67\textheight]{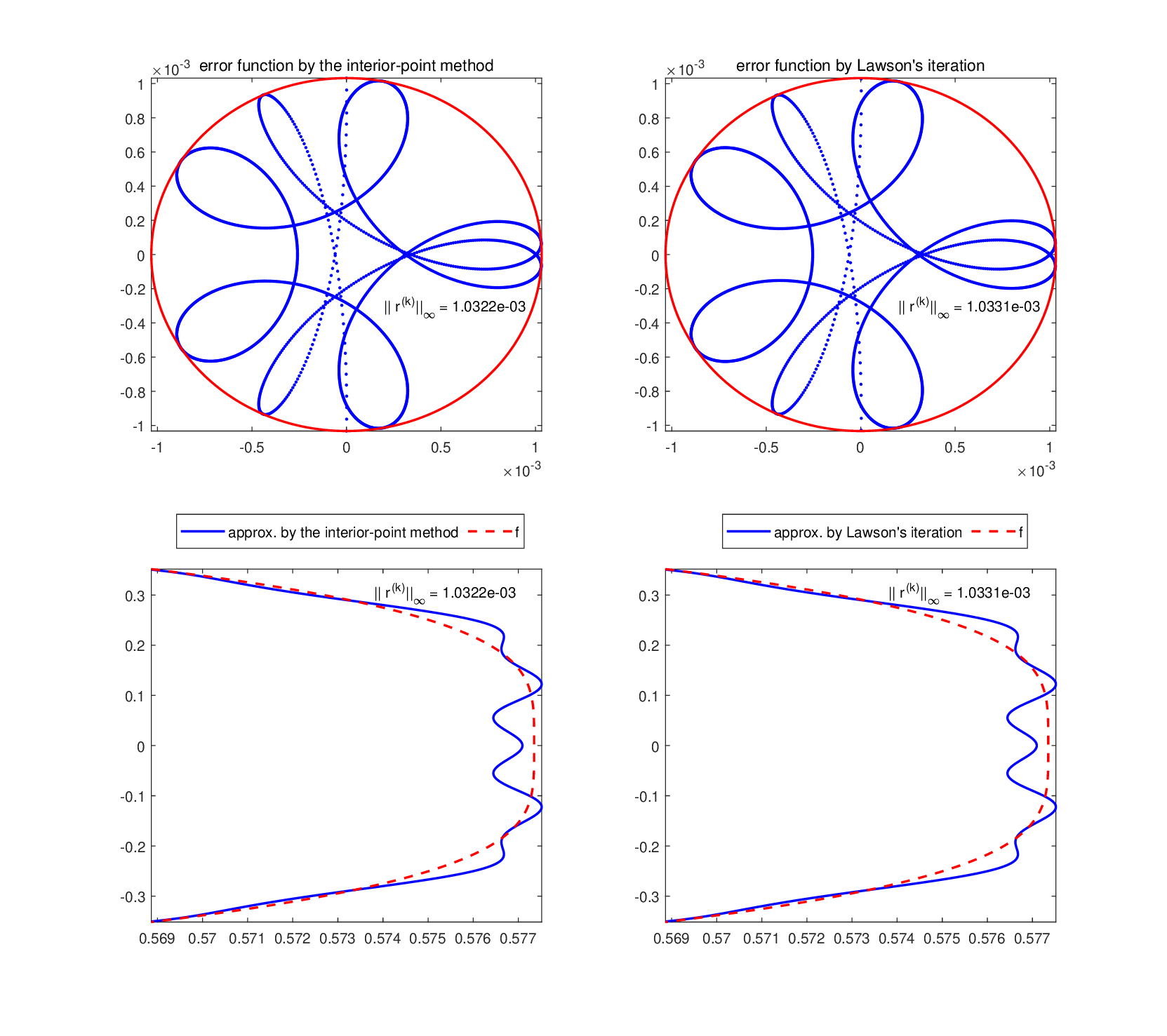}
	\vspace{-.5cm}
	\caption{\small From top to bottom: the error curve and the approximation function by the interior-point method (left) and Lawson's iteration (right) for Example \ref{eg:4} on $\bbP_9$.}
	\label{fig:Example4}
\end{figure}
\vspace{-.5cm}

\begin{figure}[H]
	\hskip-13mm
	\includegraphics[width=1.17\linewidth,height=0.59\textheight]{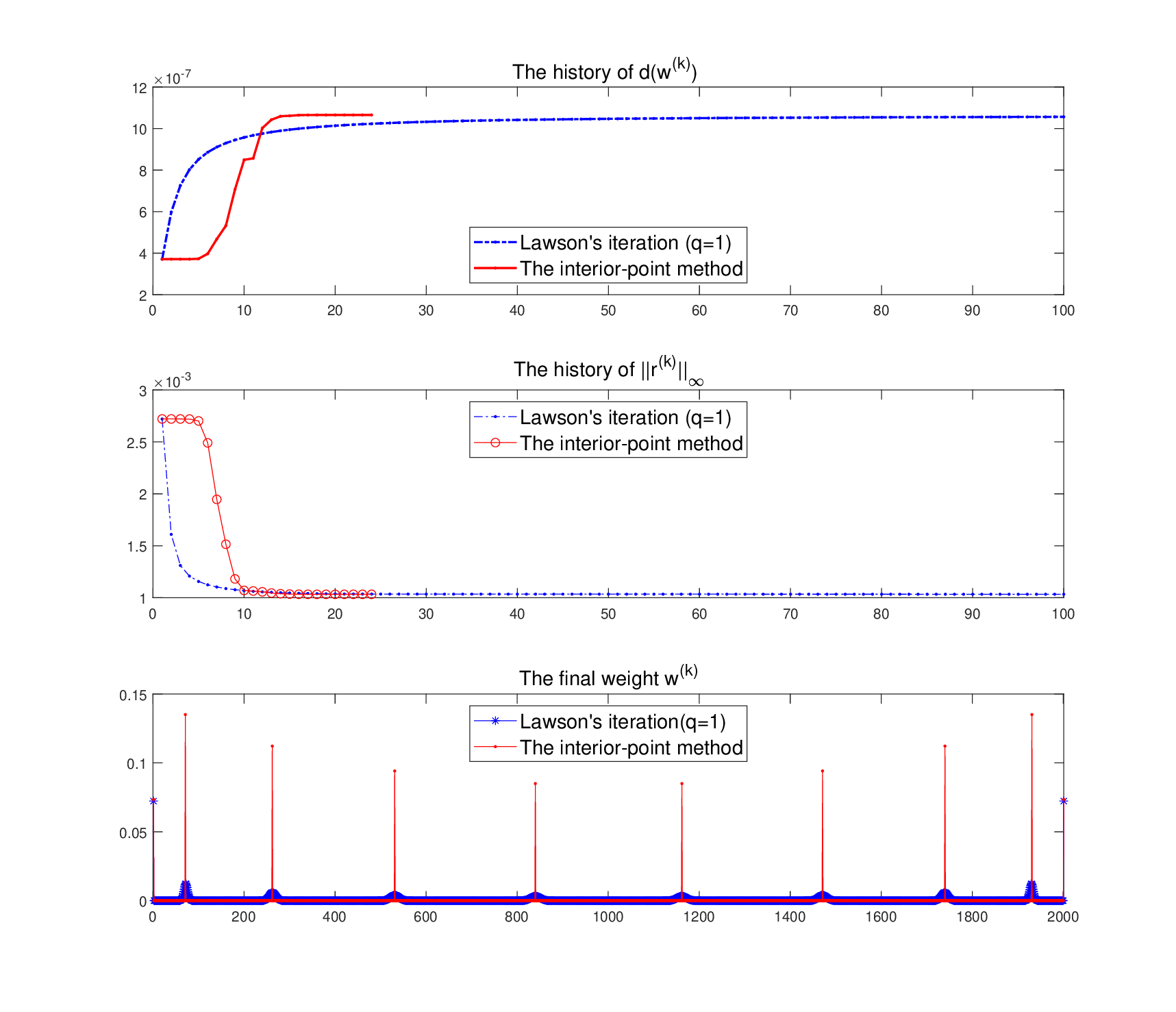}
	\caption{\small From top to bottom: $\{d(\bw^{(k)})\},\{\Vert \br^{(k)}\Vert_{\infty}\}$ and final weights $\{w_i\}_{i=1}^{2001}$ from Lawson's iteration (blue) and interior-point method (red) for Example \ref{eg:4} on $\bbP_{9}$.}
	\label{fig:Example4a}
\end{figure}

\end{example}

\begin{example}\label{eg:5}
	This complex example is from \cite[Fig 4.3, Sec.4]{natr:2020} 
	\begin{equation}\label{eq:eg2Cfun}
		g_2(z)=(1+z^4)^{1/2},~z_k=e^{{\tt i}\pi/4\tanh\left(-12+\frac{24(k-1)}{1000}\right)},~k=1,\dots,2001.
	\end{equation}
	The curve of the  error function and the approximation best polynomial on $\bbP_{21}$ with the maximal iteration number $100$ are shown in Figure \ref{fig:Example5}. The history of $\{d(\bw^{(k)})\},\{\Vert \br^{(k)}\Vert_{\infty}\}$ and the final weights $\{w_i\}_{i=1}^{2001}$ can be seen in Figure \ref{fig:Example5a}. It is clearly that the interior-point method also performs well for this example, and is effective in finding the reference points.

\begin{table}[H]
		\newcommand{\tabincell}[2]{\begin{tabular}{@{}#1@{}}#2\end{tabular}}
		\renewcommand{\arraystretch}{1}
		\caption{Numerical results in Example \ref{eg:4} and Example \ref{eg:5}}
		\begin{center}\label{tab:eg45}
			{\begin{scriptsize} \tabcolsep0.08in
					\begin{tabular}{|c||c|c|c|c|c|c|}
						\hline
						&\multicolumn{6}{|c|}{ \tabincell{c}{Test on $g_1$ \eqref{eq:eg1Cfun}  on $\bbP_{9}$ and $m=2001$}}\\
						\hline
						$\epsilon_w$&   {\rm  Methods} &  {\rm \# of iterations $k$} & {\tt $\Vert \br^{(k)}\Vert_{\infty}$} & {\tt $d(\bw^{(k)})$} & {\rm \# of final nodes} & {\rm CPU (s) } \\
						\hline 
						\bf ${10^{-6}}/{m}$    &\tabincell{c}{Lawson's iteration \\Interior-point method} &\tabincell{c}{1000\\27} & \tabincell{c}{1.0323e-3\\1.0322e-3} &\tabincell{c}{1.0646e-6\\1.0654e-6} & \tabincell{c}{242 \\ {\bf 10}} & \tabincell{c}{ 0.73\\ 0.06} \\
						\hline
						\bf ${10^{-5}}/{m}$    &\tabincell{c}{Lawson's iteration \\Interior-point method}&\tabincell{c}{1000\\27} & \tabincell{c}{1.0323e-3\\1.0322e-3} &\tabincell{c}{1.0646e-6\\1.0654e-6} & \tabincell{c}{224 \\ {\bf 10}} & \tabincell{c}{ 0.58\\ 0.03} \\
						\hline
						\bf ${10^{-4}}/{m}$    &\tabincell{c}{Lawson's iteration \\Interior-point method}&\tabincell{c}{1000\\29} & \tabincell{c}{1.0323e-3\\1.0322e-3} &\tabincell{c}{1.0646e-6\\1.0654e-6} & \tabincell{c}{206\\ {\bf 10}} & \tabincell{c}{ 0.55\\ 0.08} \\
						\hline
						&\multicolumn{6}{|c|}{ \tabincell{c}{Test on $g_1$ \eqref{eq:eg1Cfun}   on $\bbP_{16}$ and $m=2001$}}\\
						\hline
						$\epsilon_w$&   {\rm  Methods} &  {\rm \# of iterations $k$} & {\tt $\Vert \br^{(k)}\Vert_{\infty}$} & {\tt $d(\bw^{(k)})$} & {\rm \# of final nodes} & {\rm CPU (s) } \\
						\hline
						\bf ${10^{-6}}/{m}$    &\tabincell{c}{Lawson's iteration \\Interior-point method} &\tabincell{c}{1000\\36} & \tabincell{c}{1.0529e-5 \\1.0528e-5} &\tabincell{c}{1.1075e-10 \\ 1.1084e-10} & \tabincell{c}{245 \\ {\bf 19}} & \tabincell{c}{1.73\\ 0.17} \\
						\hline
						\bf ${10^{-5}}/{m}$    &\tabincell{c}{Lawson's iteration \\Interior-point method}&\tabincell{c}{1000\\55} & \tabincell{c}{1.0529e-5 \\1.0528e-5} &\tabincell{c}{1.1075e-10 \\1.1084e-10} & \tabincell{c}{229 \\ {\bf 19}} & \tabincell{c}{1.48\\ 0.22} \\
						\hline
						\bf ${10^{-4}}/{m}$    &\tabincell{c}{Lawson's iteration \\Interior-point method}&\tabincell{c}{1000\\67} & \tabincell{c}{1.0529e-5 \\1.0528e-5} &\tabincell{c}{1.1075e-10\\1.1084e-10} & \tabincell{c}{221\\ {\bf 19}} & \tabincell{c}{1.55\\0.14} \\
						\hline 
						\hline
						&\multicolumn{6}{|c|}{ \tabincell{c}{Test on $g_2$ \eqref{eq:eg2Cfun} on $\bbP_{21}$ and $m=2001$}}\\
						\hline
						$\epsilon_w$&   {\rm  Methods} &  {\rm \# of iterations $k$} & {\tt $\Vert \br^{(k)}\Vert_{\infty}$} & {\tt $d(\bw^{(k)})$} & {\rm \# of final nodes} & {\rm CPU (s) } \\
						\hline
                        \bf ${10^{-6}}/{m}$    &\tabincell{c}{Lawson's iteration \\Interior-point method} &\tabincell{c}{1000\\28} & \tabincell{c}{1.8297e-2\\ 1.8294e-2} &\tabincell{c}{3.3436e-4\\3.3469e-4} & \tabincell{c}{597 \\ {\bf 31}} & \tabincell{c}{3.55\\ 0.09} \\
                        \hline
                        \bf ${10^{-5}}/{m}$    &\tabincell{c}{Lawson's iteration \\Interior-point method}&\tabincell{c}{1000\\28} & \tabincell{c}{1.8297e-2\\ 1.8294e-2} &\tabincell{c}{3.3436e-4\\3.3469e-4} & \tabincell{c}{561 \\ {\bf 31}} & \tabincell{c}{3.59\\ 0.09} \\
                        \hline
                        \bf ${10^{-4}}/{m}$    &\tabincell{c}{Lawson's iteration \\Interior-point method}&\tabincell{c}{1000\\27} & \tabincell{c}{1.8297e-2\\ 1.8294e-2} &\tabincell{c}{3.3436e-4\\3.3469e-4} & \tabincell{c}{529\\ {\bf 31}} & \tabincell{c}{3.72\\ 0.08} \\
                        \hline 
						&\multicolumn{6}{|c|}{ \tabincell{c}{Test on $g_2$ \eqref{eq:eg2Cfun}  on $\bbP_{31}$ and $m=2001$}}\\
						\hline
						$\epsilon_w$&   {\rm  Methods} &  {\rm \# of iterations $k$} & {\tt $\Vert \br^{(k)}\Vert_{\infty}$} & {\tt $d(\bw^{(k)})$} & {\rm \# of final nodes} & {\rm CPU (s) } \\
						\hline
                         \bf ${10^{-6}}/{m}$    &\tabincell{c}{Lawson's iteration \\Interior-point method} &\tabincell{c}{1000\\28} & \tabincell{c}{1.2449e-2\\ 1.2447e-2} &\tabincell{c}{1.5477e-4\\1.5493e-4}  & \tabincell{c}{540 \\ {\bf 32}} & \tabincell{c}{8.36\\ 0.30} \\
                         \hline
                         \bf ${10^{-5}}/{m}$    &\tabincell{c}{Lawson's iteration \\Interior-point method}&\tabincell{c}{1000\\28} & \tabincell{c}{1.2449e-2\\ 1.2447e-2} &\tabincell{c}{1.5477e-4\\1.5493e-4} & \tabincell{c}{512 \\ {\bf 32}} & \tabincell{c}{8.89\\ 0.33} \\
                         \hline
                         \bf ${10^{-4}}/{m}$    &\tabincell{c}{Lawson's iteration \\Interior-point method}&\tabincell{c}{1000\\27} & \tabincell{c}{1.2449e-2\\ 1.2447e-2} &\tabincell{c}{1.5477e-4\\1.5493e-4} & \tabincell{c}{474\\ {\bf 32}} & \tabincell{c}{6.75\\ 0.22} \\
                         \hline
					\end{tabular}
				\end{scriptsize}
		}\end{center}
	\end{table}

	\begin{figure}[thb]		
	\centering
	\includegraphics[width=0.9\linewidth,height=0.67\textheight]{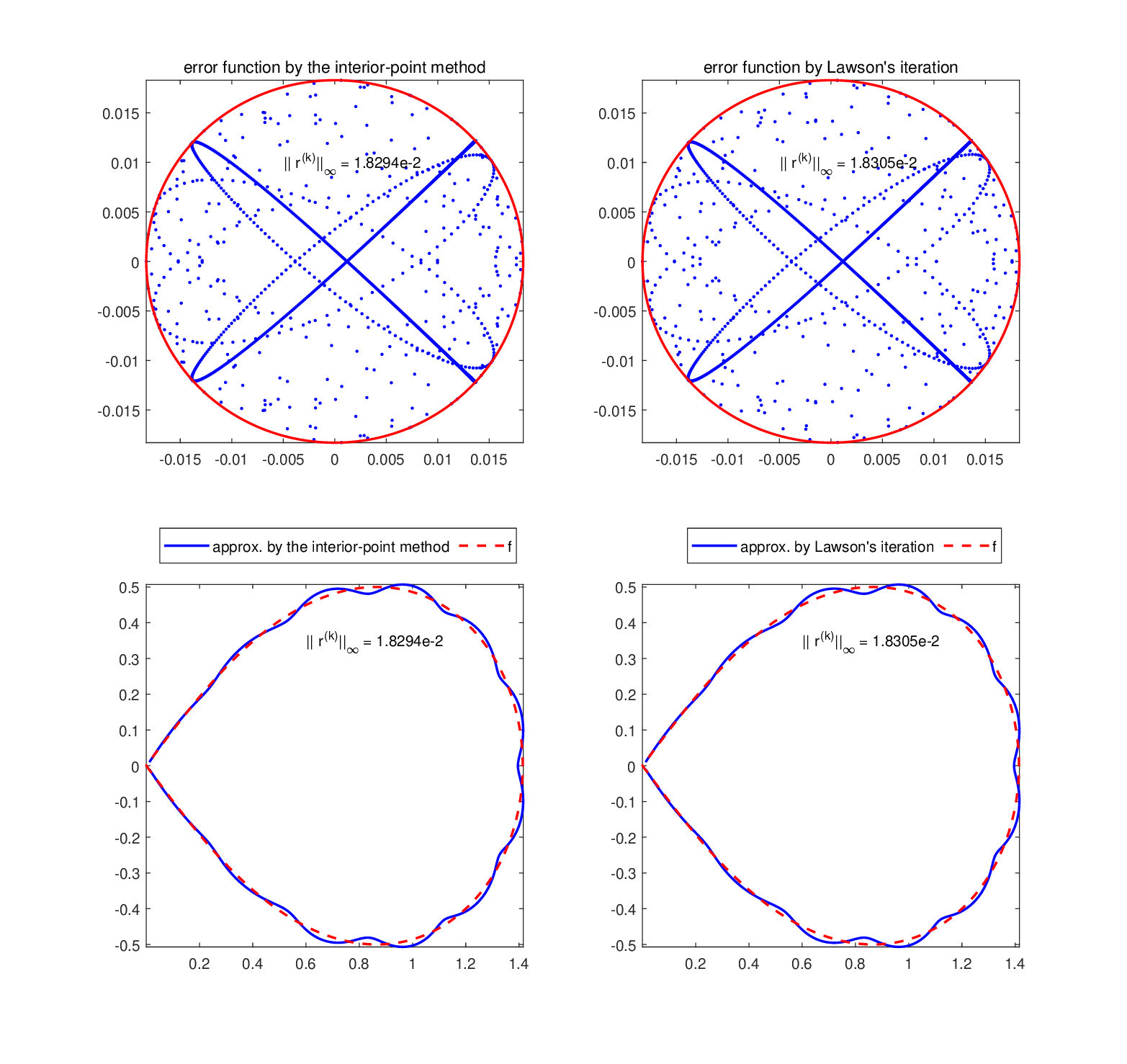}
	\vspace{-1cm}
	\caption{\small The error curve (top) and the approximation (bottom) by the interior-point method (left) and Lawson's iteration (right) for Example \ref{eg:5} on $\bbP_{21}$.}\label{fig:Example5}
    \end{figure}

	\begin{figure}[tbh!!!]
		\hskip-12mm 
		\includegraphics[width=1.17\linewidth,height=0.59\textheight]{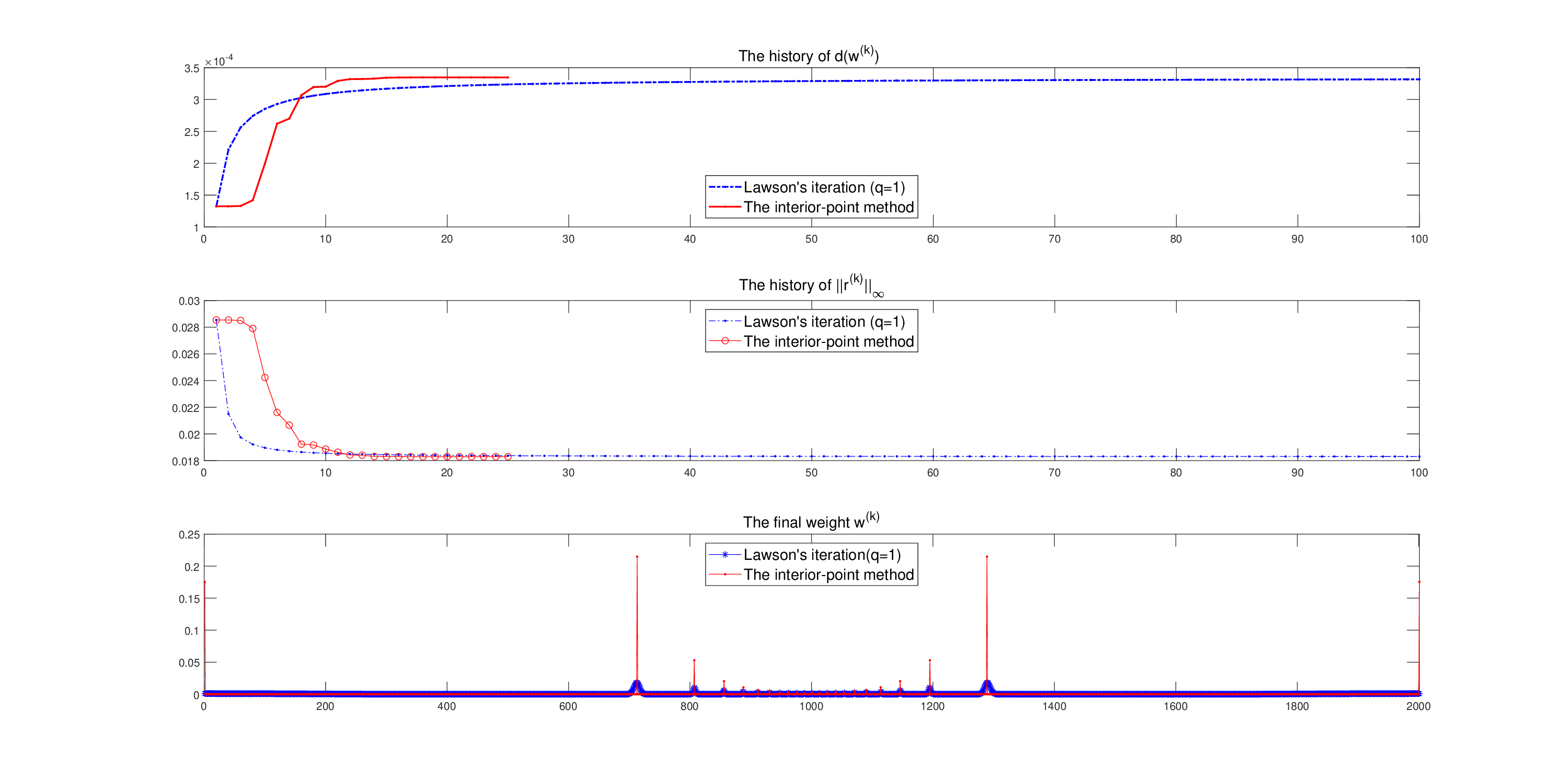}
		\vspace{-1cm}
		\caption{\small From top to bottom:$\{d(\bw^{(k)})\},\{\Vert \br^{(k)}\Vert_{\infty}\}$ and stopping weights $\{\bw_i\}_{i=1}^{2001}$ from Lawson's iteration(blue) and interior-point method(red) for Example \ref{eg:5} on $\bbP_{21}$.}
		\label{fig:Example5a}
	\end{figure}
\end{example}

\section{Concluding remarks}\label{sec:conclusion}
In this paper, from modern optimization theory, we reconsidered the classical linear Chebyshev approximation on a given finite set and  Lawson's iteration.  By an $L_q$-equivalent formulation \eqref{eq:q-equv} for $q\ge 1$, we developed the associated Lagrange $L_q$-weighted  dual problem. By investigating two particular $L_q$-norm cases (i.e., $L_1$ and $L_2$), we first provided  an elementary proof for the well-known Alternation Theorem  for the real case  based primarily upon the duality theory of  linear programming. In order to analyze, and further, to improve the traditional Lawson's iteration, we focus on the $L_2$-weighted dual programming. The monotonic convergence of the dual objective function for two commonly used weight-updates are revealed; more importantly, by relying on the equivalent $L_2$-weighted dual problem, Newton's type iteration, in the framework of the interior-point method,  has been developed. Fast convergence of the weights are observed and the method has the capability to find   the reference points (the equioscillation points in the Alternation Theorem for the real case) that characterize the unique minimax approximation.

\section*{Data availibility} No datasets are used in the paper.

\section*{Declarations}

{\bf Conflict of interest} The authors declare that they have no conflict of interest.
 
 {\small 
 \def\noopsort#1{}\def\l{\char32l}\def\v#1{{\accent20 #1}}
  \let\^^_=\v\def\hbk{hardback}\def\pbk{paperback}
\providecommand{\href}[2]{#2}
\providecommand{\arxiv}[1]{\href{http://arxiv.org/abs/#1}{arXiv:#1}}
\providecommand{\url}[1]{\texttt{#1}}
\providecommand{\urlprefix}{URL }

 }


\def\noopsort#1{}\def\l{\char32l}\def\v#1{{\accent20 #1}}
  \let\^^_=\v\def\hbk{hardback}\def\pbk{paperback}
\providecommand{\href}[2]{#2}
\providecommand{\arxiv}[1]{\href{http://arxiv.org/abs/#1}{arXiv:#1}}
\providecommand{\url}[1]{\texttt{#1}}
\providecommand{\urlprefix}{URL }
\begin{thebibliography}{10}

\bibitem{boyd:2004}
\newblock S.~Boyd and L.~Vandenberghe,
\newblock \emph{Convex Optimization},
\newblock Cambridge University Press, 2004.

\bibitem{brnt:2021}
\newblock P.~D. Brubeck, Y.~Nakatsukasa and L.~N. Trefethen,
\newblock Vandermonde with {A}rnoldi,
\newblock \emph{SIAM Rev.}, \textbf{63} (2021), 405--415.

\bibitem{chen:1982}
\newblock E.~W. Cheney,
\newblock \emph{Introduction to Approximation Theory},
\newblock 2nd edition,
\newblock Chelsea Publishing Company, New York, 1982.

\bibitem{clin:1972}
\newblock A.~K. Cline,
\newblock Rate of convergence of {L}awson's algorithm,
\newblock \emph{Math. Comp.}, \textbf{26} (1972), 167--176.

\bibitem{elwi:1976}
\newblock S.~Ellacott and J.~Williams,
\newblock Linear {C}hebyshev approximation in the complex plane using
  {L}awson's algorithm,
\newblock \emph{Math. Comp.}, \textbf{30} (1976), 35--44.

\bibitem{fint:2018}
\newblock S.-I. Filip, Y.~Nakatsukasa, L.~N. Trefethen and B.~Beckermann,
\newblock Rational minimax approximation via adaptive barycentric
  representations,
\newblock \emph{SIAM J. Sci. Comput.}, \textbf{40} (2018), A2427--A2455.

\bibitem{govl:2013}
\newblock G.~H. Golub and C.~F. {Van Loan},
\newblock \emph{Matrix Computations},
\newblock 4th edition,
\newblock Johns Hopkins University Press, Baltimore, Maryland, 2013.

\bibitem{laws:1961}
\newblock C.~L. Lawson,
\newblock \emph{Contributions to the Theory of Linear Least Maximum
  Approximations},
\newblock PhD thesis, UCLA, USA, 1961.

\bibitem{lore:1966}
\newblock G.~G. Lorentz,
\newblock \emph{Approximation of Functions},
\newblock Holt, Rinehart and Winston, New York, 1966.

\bibitem{nase:2018}
\newblock Y.~Nakatsukasa, O.~S\`ete and L.~N. Trefethen,
\newblock The {AAA} algorithm for rational approximation,
\newblock \emph{SIAM J. Sci. Comput.}, \textbf{40} (2018), A1494--A1522.

\bibitem{natr:2020}
\newblock Y.~Nakatsukasa and L.~N. Trefethen,
\newblock An algorithm for real and complex rational minimax approximation,
\newblock \emph{SIAM J. Sci. Comput.}, \textbf{42} (2020), A3157--A3179.

\bibitem{nowr:2006}
\newblock J.~Nocedal and S.~Wright,
\newblock \emph{Numerical Optimization},
\newblock 2nd edition,
\newblock Springer, New York, 2006.

\bibitem{pous:1911}
\newblock C.~D. L.~V. Poussin,
\newblock Sur la m\'ethode de l\'approximation minimum,
\newblock \emph{Soc. Sci. de Bruxelles, Annales, Seconde Partie, M\'emoires},
  \textbf{35} (1911), 1--16.

\bibitem{rice:1969}
\newblock J.~R. Rice,
\newblock \emph{The approximation of functions},
\newblock Addison-Wesley publishing Company, 1969.

\bibitem{rius:1968}
\newblock J.~R. Rice and K.~H. Usow,
\newblock The {L}awson algorithm and extensions,
\newblock \emph{Math. Comp.}, \textbf{22} (1968), 118--127.

\bibitem{rish:1961}
\newblock T.~J. Rivlin and H.~S. Shapiro,
\newblock A unified approach to certain problems of approximation and
  minimization,
\newblock \emph{J. Soc. Indust. Appl. Math.}, \textbf{9} (1961), 670--699.

\bibitem{will:1972}
\newblock J.~Williams,
\newblock Numerical {C}hebyshev approximation in the complex plane,
\newblock \emph{SIAM J. Numer. Anal.}, \textbf{9} (1972), 638--649.

\bibitem{zhsl:2023}
\newblock L.-H. Zhang, Y.~Su and R.-C. Li,
\newblock Accurate polynomial fitting and evaluation via {A}rnoldi,
\newblock \emph{Numerical Algebra, Control and Optimization}, \textbf{to
  appear},
\newblock Doi:10.3934/naco.2023002.

\end{thebibliography}

%
\end{document}